\documentclass[12pt,english]{amsart}
\usepackage{pxfonts}
\setlength\topmargin{0cm}
\setlength\textheight{21cm}
\setlength\textwidth{14.5cm}
\setlength\footskip{2cm}

\usepackage{amsmath}
\usepackage{amsthm}
\usepackage{amsfonts}
\usepackage{amssymb}
\usepackage{amscd}
\usepackage[dvips]{graphicx}
\usepackage{epstopdf}
\usepackage{psfrag}

\bibliographystyle{plain}

\theoremstyle{plain}
\newtheorem{mainthm}{Theorem}

\swapnumbers
\newtheorem{theorem}{Theorem}[section]
\newtheorem{proposition}[theorem]{Proposition}
\newtheorem{corollary}[theorem]{Corollary}
\newtheorem{lemma}[theorem]{Lemma}

\newtheorem{question}[theorem]{Question}

\newtheorem{remark}[theorem]{Remark}

\newcommand{\Z}{\mathbb{Z}}
\newcommand{\N}{\mathbb{N}}
\newcommand{\R}{\mathbb{R}}
\newcommand{\eps}{\varepsilon}


\newcommand{\dm}{\diff^{1}_{m}(M)}
\newcommand{\dw}{\diff^{1}_{\omega}(M)}

\newcommand{\diff}{\operatorname{Diff}}

\newcommand{\dis}{\displaystyle}
\newcommand{\vs}{\vspace{0.1cm}}
\newcommand{\vsm}{\vspace{0.5cm}}

\title[A lower bound for topological entropy]{A lower bound for topological entropy of generic non Anosov symplectic diffeomorphisms}
\author{Thiago Catalan and Ali Tahzibi}
\thanks{2000 AMS Subject
Classification: 37C10 (Primary), 37D25 (Secondary).
{\em Key words and phrases}: Topological Entropy, Symbolic Extensions, Homoclinic Tangency, Lyapunov Exponents.
Partially supported by CAPES and FAPESP.}

\date{\today}

\begin{document}

\maketitle

\begin{abstract}
We prove that a $C^1-$generic symplectic diffeomorphism is either Anosov or the topological entropy is bounded from below by the supremum over
the smallest positive Lyapunov exponent of the periodic points.  We also prove that $C^1-$generic symplectic diffeomorphisms outside
the Anosov ones do not admit symbolic extension and finally we give examples of  volume preserving diffeomorphisms which
are not point of upper semicontinuity of entropy function in $C^1-$topology.
\end{abstract}

\section{Introduction}

The topological entropy  is one of the most important topological invariants for dynamical systems. Informally, the topological entropy
calculates the ``number of different trajectories" of the dynamics. Formally, we  define it in the following way
$$
h(f)= \lim_{\eps\rightarrow 0} \limsup_{n\rightarrow \infty} \frac{1}{n}\log r(n,\eps);
$$
where $r(n,\eps)$ is the maximum amount of $\eps$-distinct orbits of length
$n$. Two points have $\eps$-distinct orbits of length $n$ if there is  $0\leq j\leq n$ such that $d(f^j(x), f^j(y)) > \eps.$

For Axiom A diffeomorphisms,  Bowen \cite{Bowen}  proved that entropy determines the asymptotical exponential growth of the number of periodic
points and by a Katok's result for any $C^{1+\alpha}$ ($\alpha > 0$) diffeomorphism of a two dimensional ma\-ni\-fold entropy is bounded above
by such growth rate: $h(f) \leq \limsup_{n \rightarrow \infty} \frac{P_n(f)}{n}.$

In this paper we  prove lower estimates for topological entropy of $C^1-$generic symplectic dynamics in terms of Lyapunov exponents of periodic
points of the system ( See theorems \ref{symplectic}, \ref{equality}).

We relate such lower bounds for the entropy to the (semi continuity) regula\-ri\-ty of the entropy function with respect to the dynamics (see
Theorem \ref{example}) and construct examples of  surface diffeomorphisms which are not  point of semi continuity of the topological entropy.

  Finally, we take benefit of the estimates in order to prove the non existence of symbolic extensions for $C^1-$generic symplectic diffeomorphisms far from the Anosov ones ( See theorem \ref{symbolic}).

\subsection{Lower estimates for topological entropy}
Firstly, Newhouse in \cite{Newhouse1} got some lower bound for topological entropy of generic area preserving diffeomorphisms on surfaces far
from Anosov diffeomorphisms. Remember for a $C^1$ diffeomorphism over some manifold $M$ be Anosov means that the whole manifold $M$ is a
hyperbolic set for $f$, where an $f-$invariant compact set $\Lambda$ of $M$ is a {\it hyperbolic set} if there is a continuous and
$Df$-invariant splitting $T_{\Lambda}M=E^s\oplus E^u$ such that there are constants $0<\lambda<1$ and $C>0$, satisfying
$$
\|Df_x^k| E^s(x)\|\leq C\lambda^k \quad \text{and}\quad \|Df_x^{-k}| E^u(x)\|\leq C\lambda^k,
$$
for every $x\in \Lambda$ and $k>0$.

More precisely, let $M$ be a compact, connected surface with a volume form $m$, and  denote by $\dm$ the set of conservative $C^1$
diffeomorphisms, i.e., the set formed by diffeomorphisms that preserve the volume form $m$.  Taking
$$s(f)=\sup\left\{\dis\frac{1}{\tau(p,f)}\log \lambda(p,f)\right\}$$ over all hyperbolic periodic points $p$ of $f$, where  $\tau(p,f)$ is the
minimum period of the hyperbolic  periodic point $p$, and ${\lambda}(p,f)$ is the absolute value of the unique eigenvalue of $Df^{\tau(p,f)}(p)$
with absolute value larger than one, Newhouse's result is the following.

\begin{theorem} (Newhouse)
There exists a residual subset $\mathcal{B}\subset\diff_m^1(M)$ such that if $f\in\mathcal{B}$ is a non Anosov diffeomorphism then
$$
h(f)\geq s(f).
$$
\label{new1}
\end{theorem}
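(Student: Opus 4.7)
The plan is to show, for each $\eps > 0$ and each $f$ in a $C^1$-open dense subset $\SD_\eps \subset \dm$, that either $f$ is Anosov or $h(f) \geq s(f) - \eps$; intersecting over $\eps = 1/n$ then yields the residual set $\SB$. The core tool is the creation of horseshoes near a homoclinic tangency of a hyperbolic periodic orbit whose topological entropy, measured per iterate of $f$, matches that orbit's positive Lyapunov exponent.

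First I would fix a non-Anosov $f \in \dm$ and a hyperbolic periodic point $p$ realizing $\frac{1}{\tau(p,f)}\log\lambda(p,f) > s(f) - \eps/2$. Since $f$ is non-Anosov and conservative, the $C^1$-density of homoclinic tangencies in the non-Anosov regime of $\dm$ (a version of Newhouse's tangency creation via unfolding of non-dominated splittings, in the area-preserving category) provides $g$ arbitrarily $C^1$-close to $f$ such that the hyperbolic continuation $p_g$ of $p$ exhibits a homoclinic tangency at a point $q \in W^s(p_g) \cap W^u(p_g)$, with $\lambda(p_g,g)$ and $\tau(p_g,g)$ essentially unchanged. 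At $q$, linearize near $p_g$ and choose a thin rectangle $R$ aligned with $W^s(p_g)$ of height $\sim \lambda(p_g,g)^{-N}$ and width of order one; then $g^{N\tau}$ stretches $R$ by a factor $\lambda(p_g,g)^N$ in the unstable direction (exactly, by conservativity of the linearized map), and a fixed transit map $g^M$ folds the stretched image across $R$ in $\lfloor c\,\lambda(p_g,g)^N \rfloor$ transverse strips. This yields a hyperbolic basic set $\Lambda_N$ on which $g^{N\tau + M}$ is topologically conjugate to the full shift on that many symbols, so
$$
h(g) \;\geq\; h(g,\Lambda_N) \;=\; \frac{\log \lfloor c\,\lambda(p_g,g)^N \rfloor}{N\tau(p_g,g) + M} \;\xrightarrow[N\to\infty]{}\; \frac{\log \lambda(p_g,g)}{\tau(p_g,g)}\;>\;s(f) - \eps.
$$
Since $\Lambda_N$ is uniformly hyperbolic, it persists on an open $C^1$-neighborhood $\SU(g) \ni g$, and every $f' \in \SU(g)$ satisfies $h(f') > s(f) - \eps$.

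Taking the union of such open neighborhoods across all base points $f$, together with the open Anosov set, yields a $C^1$-open dense set $\SD_\eps$. Combined with the lower semicontinuity of $s$ (as a supremum of continuous functions on pieces of $\dm$ where hyperbolic orbits persist) and a standard Baire argument restricting to the countable collection of relevant periods, the intersection $\bigcap_n \SD_{1/n}$ is a residual set $\SB$ on which every non-Anosov $f$ obeys $h(f) \geq s(f)$. The main obstacle in this program is the horseshoe construction: one needs the entropy of $\Lambda_N$ \emph{per iterate of $f$}, rather than per iterate of the renormalized return map, to reach the full exponent $\frac{\log\lambda(p,f)}{\tau(p,f)}$. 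This demands a sharp count of the disjoint transverse strips produced by the unfolded tangency, and uses conservativity in an essential way, since without area-preservation a contractive deficit in the stable direction would degrade the number of strips and prevent the limit from matching $\frac{\log \lambda(p,f)}{\tau(p,f)}$.
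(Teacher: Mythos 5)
Your architecture (open--dense sets $\SD_\eps$, Baire intersection, lower semicontinuity of $s$, tangency creation for a periodic point nearly realizing $s(f)$, a horseshoe whose per-iterate entropy approaches $\frac{1}{\tau}\log\lambda$, and robustness of the horseshoe) is exactly the one Newhouse uses and the one this paper follows for its higher-dimensional generalization (Theorem \ref{symplectic} via the Main Technical Proposition); note the paper itself only cites Newhouse for Theorem \ref{new1} and supplies details only in the symplectic setting. However, there is a genuine gap at the step you yourself flag as ``the main obstacle'': you assert that a fixed transit map $g^M$ folds the image $g^{N\tau}(R)$ across $R$ in $\lfloor c\,\lambda^N\rfloor$ transverse strips. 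For a single (generically quadratic) homoclinic tangency this is false: a quadratic fold crosses a band of height $\lambda^{-N}$ in a bounded number of full components, independent of $N$, so the renormalized return maps produce horseshoes with a bounded number of legs and return time $N\tau+M$, hence entropy tending to $0$, not to $\frac{1}{\tau}\log\lambda$. Nothing in your construction manufactures the exponentially many oscillations that your strip count presupposes.

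The missing ingredient is Newhouse's two-stage perturbation, reproduced in Steps 2--3 of the paper's proof of the Main Technical Proposition: first flatten the tangency to a whole \emph{segment} of tangency $I$ (in the paper this uses a pasting lemma and, in higher dimensions, the Lagrangian character of the invariant manifolds), and then apply the ``snake'' perturbation $y\mapsto y+A\cos(\pi x N/2a)$ along $I$, which is $\delta$-small in $C^1$ precisely when the amplitude is $A\sim\delta/N$, and which creates $N$ transverse legs by design. The number of legs $N$ is then a free parameter, and the quantitative heart of the argument is to bound the return time $t$ of the resulting horseshoe from above in terms of $A$: one needs $\lambda^{-t}\gtrsim A$ on the unstable side \emph{and} the matching estimate on the stable side (the paper's Lemma \ref{afirma}), so that $\frac{1}{t}\log N\to\frac{1}{\tau}\log\lambda$ as $N\to\infty$. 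This is where conservativity (equality of the stable and unstable rates in dimension two, and of $\chi(p,g)$ with the modulus of the largest negative exponent in the symplectic case) enters --- you correctly identify its role, but without the segment-of-tangency and snake steps, and without the two-sided return-time estimate, the claimed strip count and hence the limit $h(g,\Lambda_N)\to\frac{1}{\tau}\log\lambda$ are not established.
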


Here we show that indeed generically the reverse inequality also holds and this implies:
\begin{mainthm} \label{equality}
There exists a residual subset $\mathcal{B}\subset\dm$ (volume preserving surface diffeomorphisms) such that if $f\in\mathcal{B}$ is a non
Anosov diffeomorphism then
$$
h(f) = s(f).$$
\end{mainthm}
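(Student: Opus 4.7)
By Newhouse's Theorem~\ref{new1}, on a residual set every non-Anosov $f\in\dm$ already satisfies $h(f)\ge s(f)$; it therefore suffices to prove the reverse inequality $h(f)\le s(f)$ on a residual subset, and then take the intersection of the two residual sets for $\mathcal B$. The route I would take combines Ruelle's inequality with a conservative form of Ma\~n\'e's ergodic closing lemma, exploiting Baire-category properties of the functional $s$.

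By the variational principle and Ruelle's inequality on a surface,
$$
h(f)\;=\;\sup_\mu h_\mu(f)\;\le\;\sup_\mu \lambda^+(\mu)\;=:\;S(f),
$$
where the supremum runs over ergodic $f$-invariant probability measures and $\lambda^+(\mu)\ge 0$ is the larger Oseledets exponent. The map $s:\dm\to[0,+\infty]$ is lower semi-continuous: each hyperbolic periodic orbit of $f$ persists under $C^1$-perturbations with its period unchanged and its Lyapunov eigenvalue varying continuously, so $\liminf_{g\to f}s(g)\ge s(f)$. Since $\dm$ is a Baire space, the set $\mathcal C$ of continuity points of $s$ is residual, and at every $f\in\mathcal C$ the upper envelope $\bar s(f):=\limsup_{g\to f}s(g)$ coincides with $s(f)$. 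It therefore suffices to prove $S(f)\le \bar s(f)$ for every $f\in\dm$.

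To establish this last bound I would appeal to a conservative version of Ma\~n\'e's ergodic closing lemma with control of Lyapunov exponents: for any ergodic $\mu$ with $\lambda^+(\mu)>0$, one produces perturbations $g_n\to f$ in $\dm$ and hyperbolic periodic points $p_n\in\per(g_n)$ such that $\frac{1}{\tau(p_n,g_n)}\log\lambda(p_n,g_n)\to\lambda^+(\mu)$; measures with $\lambda^+(\mu)=0$ need not be considered since then $h_\mu(f)=0$ by Ruelle. This yields $\lambda^+(\mu)\le\bar s(f)$ for every ergodic $\mu$, hence $S(f)\le\bar s(f)$, and consequently $h(f)\le S(f)\le s(f)$ on $\mathcal C$. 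The main obstacle is precisely this closing-with-exponents step in the volume-preserving category: one must close a long approximately recurrent segment of a $\mu$-typical orbit by a Moser-type perturbation within $\dm$ while preserving the exponential growth of $\|Df^n\|$ along it, so that the created periodic orbit has Lyapunov exponent close to $\lambda^+(\mu)$ rather than merely positive. A Franks/linking-style argument adapted to conservative perturbations, in the spirit of what Newhouse already uses for Theorem~\ref{new1}, is the natural tool.
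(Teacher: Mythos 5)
Your proposal is correct and follows essentially the same route as the paper: Newhouse's theorem supplies $h(f)\ge s(f)$ generically, and the reverse inequality comes from the variational principle, Ruelle's inequality, and a conservative closing lemma producing periodic orbits whose Lyapunov exponents approximate those of a given ergodic measure --- exactly the ingredient the paper imports from Abdenur--Bonatti--Crovisier (adapted to $\dm$ via Arnaud's ergodic closing lemma and the conservative Franks lemma). The only cosmetic difference is the genericity bookkeeping: you pass to continuity points of the lower semicontinuous functional $s$ and bound $h(f)\le \limsup_{g\to f}s(g)$, whereas the paper invokes the generic form of the Abdenur--Bonatti--Crovisier statement in which the approximating periodic orbits belong to $f$ itself; both arguments also rest on the two-dimensional conservative fact that a periodic point with a positive exponent is automatically hyperbolic, which the paper makes explicit in its case split and you use implicitly.
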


Observe as a corollary of this theorem and semi continuity of $f \rightarrow s(f)$ (see preliminary definitions in the next section) we conclude
that ``generically" topological entropy is semi continuous in $C^1-$topology. However, it is not known whether the semi-continuity points of
topological entropy form a $C^1-$generic subset.

It is interesting to mention that among Anosov diffeomorphisms, diffeomorphisms of a $C^1-$open and dense subset satisfy $h(f) < s(f).$ See
Proposition \ref{anosov} which gives an upper bound for the entropy of  Anosov diffeomorphisms.

In general it may exist diffeomorphisms where $h(f) > s(f).$  In fact there exist  even minimal diffeomorphism with positive
entropy. In two dimensional case there are minimal homeomorphisms with positive entropy \cite{Rees}. However, these examples are not volume preserving (and should be outside a $C^1-$ generic subset).

\begin{question}
Is there an example of a conservative $C^1$ surface diffeomorphism where $h(f) > s(f)?$
\end{question}

Our next result is a generalization of Newhouse's theorem to higher dimensional symplectic setting. Let $(M,\omega)$ be a compact, connected,
smooth Riemanian symplectic manifold. For a hyperbolic periodic point $p$ of $f$, we denote by  $\lambda(p,f)$ the absolute value of the
smallest eigenvalue of $Df^{\tau(p,f)}(p)$  between those ones with absolute value bigger than one. Define
$$s(f):=\sup\left\{\dis\frac{1}{\tau(p,f)}\log\lambda(p,f)\right\}$$ over all hyperbolic periodic points $p$ of $f$, and then what we prove is the
following.

\begin{mainthm} \label{symplectic}
There exists a residual subset $\mathcal{B}\subset\dw$ such that if $f\in\mathcal{B}$ is a non Anosov diffeomorphism then
$$
h(f)\geq s(f).
$$
\end{mainthm}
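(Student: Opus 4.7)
My strategy is to adapt Newhouse's surface argument to the symplectic setting: for each $\eps>0$ I aim to construct a basic set $\La\subset M$ with $h(f|_\La)\geq s(f)-\eps$, which immediately gives $h(f)\geq s(f)$.

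I would define $\SB\subset\dw$ as the intersection of the following $C^1$-residual conditions: (i) $f$ is Kupka-Smale; (ii) every hyperbolic periodic point of $f$ admits a transverse homoclinic intersection --- available from the $C^1$ connecting lemma in the symplectic category, via Arnaud's symplectic variant and subsequent refinements; and (iii) $g\mapsto s(g)$ is lower semi-continuous at $f$, which is automatic on a residual subset since $s$ is a supremum of functionals obtained by continuation of hyperbolic periodic orbits.

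Now fix $f\in\SB$ non-Anosov and $\eps>0$. Choose a hyperbolic periodic orbit $p$ with $\frac{1}{\tau(p,f)}\log\la(p,f)>s(f)-\eps$ and a transverse homoclinic point $q\in W^u(p)\tcap W^s(p)$. I would pick linearizing coordinates for $f^{\tau(p,f)}$ near $p$, with the weakest-unstable eigendirection (of modulus $\la(p,f)$) placed along one coordinate axis. In this chart, select a small box $B$ that is very thin along every stronger unstable direction and has length $\de$ along the weakest unstable axis. For large $k$, the iterate $f^{k\tau(p,f)}(B)$ stretches by roughly $\la(p,f)^k$ along the weakest direction while staying $O(\de)$ in the complementary unstable directions. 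Chasing the homoclinic excursion through $q$ back to a neighborhood of $p$ closes the loop and produces a return map on $B$ with approximately $\la(p,f)^k$ pairwise disjoint Markov strips. The induced basic set $\La$ is conjugate to a full shift on that many symbols for $f^{k\tau(p,f)}$, so for $k$ large enough $h(f|_\La)\geq\frac{1}{\tau(p,f)}\log\la(p,f)-\eps$, hence $h(f)\geq s(f)-2\eps$, and letting $\eps\to 0$ concludes.

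The main obstacle is the last horseshoe construction when $\dim M>2$: controlling the images $f^{j\tau(p,f)}(B)$ along the stronger unstable directions and through the transition near $q$. In dimension two this reduces to Newhouse's $\la$-Lemma argument, but in higher dimensions it requires a weak-unstable invariant lamination near $\SO(p)\cup\SO(q)$ arising from the dominated splitting inside $E^u(p)$, together with uniform $C^1$ estimates along the (compact) homoclinic orbit segment. Symplecticity helps here: the eigenvalues of $Df^{\tau(p,f)}(p)$ come in reciprocal pairs, so a dominated splitting inside $E^u(p)$ is mirrored inside $E^s(p)$ and constrains the transverse intersection at $q$ to respect the weakest-unstable/strong-stable subbundles, preventing uncontrolled folding when iterating $B$.
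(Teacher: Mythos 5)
There is a genuine gap, and it sits exactly at the sentence ``Chasing the homoclinic excursion through $q$ back to a neighborhood of $p$ \dots produces a return map on $B$ with approximately $\lambda(p,f)^k$ pairwise disjoint Markov strips.'' Stretching $B$ by a factor $\lambda(p,f)^k$ along the weak-unstable direction does not produce $\lambda(p,f)^k$ full crossings of $B$: the long unstable ``noodle'' $f^{k\tau(p,f)}(B)$ leaves the neighborhood of the homoclinic orbit, and the number of components of the return image that cross $B$ fully equals the number of transverse homoclinic intersections of $W^u(p)$ with the stable fibers of $B$ --- a global quantity with no a priori relation to the local expansion rate. (The affine $2$-legged horseshoe with unstable eigenvalue $100$ illustrates this: the image of the square stretches by $100$ but meets the square in only $2$ legs, so the associated basic set has entropy $\log 2\ll\log 100$.) A decisive internal check: your argument never uses the hypothesis that $f$ is not Anosov --- conditions (i)--(iii) hold on a residual subset of the Anosov diffeomorphisms as well --- yet Proposition \ref{anosov} of this very paper shows that $h(f)<s(f)$ on a $C^1$-open and dense set of area-preserving Anosov surface diffeomorphisms. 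So the construction cannot work as stated, already in dimension two; the difficulty you flag (controlling the strong-unstable directions when $\dim M>2$) is not where the argument breaks.

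This is precisely why the paper's proof is perturbative rather than a direct construction for $f$. The non-Anosov hypothesis is used to create, by a $C^1$-perturbation, a homoclinic \emph{tangency} for $p$ (Newhouse's mechanism), which is then flattened to an \emph{interval} of tangency using the fact that stable and unstable manifolds are Lagrangian (Lemma \ref{curva}). A ``snake'' perturbation $\Theta$ of frequency $N$ and amplitude $A=\frac{2Ka\delta}{\pi N}$ then plants $N$ transverse homoclinic points inside a box of thickness comparable to $A$; Lemma \ref{afirma} gives $A<K_1\min\{\|Dg_p^{-t}|E^u\|,\|Dg_p^{t}|E^s\|\}$, which is exactly the coupling $N\gtrsim \lambda(p,g)^{t}$ between the number of legs and the return time that your argument is missing, and symplecticity identifies the two exponents in the minimum with $\chi(p,g)$. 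Since the horseshoe with $\frac1t\log N>\chi(p,g)-\frac1n$ exists only for the perturbed maps $g$, the theorem is then deduced by showing the sets $B_{n,m}$ of diffeomorphisms carrying such a horseshoe are open and dense in $\dw\setminus\closure(\mathcal{A})$ and intersecting over $n,m$. To repair your write-up you would have to reinstate this tangency-plus-snake mechanism and the Baire argument, i.e., essentially reproduce the paper's proof.
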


\begin{question}
What about Conservative case? What can be said if we define $s(f)$ as the supremum over the sum of all positive Lyapunov exponents of periodic
points instead of just the smallest Lyapunov exponent of periodic points?

\end{question}

\subsection{Regularity of Entropy}

An important problem in smooth ergodic theory is the regularity of entropy with respect to dynamics. By Newhouse result we know that $f
\rightarrow h(f)$ is  upper semicontinuous in the $C^{\infty}$ topology for any compact boundaryless manifold and using Katok's result it is
indeed continuous for  $C^{\infty}$ surface diffeomorphisms. Here we show that

\begin{mainthm} \label{example}
There are examples of surface diffeomorphisms $ f_0 \in \diff^{\infty} (M)$ such that $f \rightarrow h(f)$ is not even upper semi continuous in
the $C^1-$topology at $f_0$.
\end{mainthm}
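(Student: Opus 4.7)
The plan is to exhibit a specific $C^\infty$ volume-preserving surface diffeomorphism $f_0$ with zero topological entropy but carrying a hyperbolic fixed point of positive Lyapunov exponent, and then invoke the density of the residual set from Newhouse's theorem to produce a $C^1$-approximating sequence whose entropies are bounded away from zero.

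Take $M = S^2$ equipped with its standard area form, and choose a Morse Hamiltonian $H \colon S^2 \to \R$ that has a nondegenerate saddle critical point $p$ (for instance, $H$ with two maxima, one saddle and one minimum, compatible with $\chi(S^2) = 2$). Let $f_0$ be the time-one map of the Hamiltonian flow of $H$; being a Hamiltonian diffeomorphism on a surface, $f_0$ is smooth and preserves the area form. Every orbit of the flow is a fixed point, a closed orbit on a regular level set of $H$, or a separatrix joining the saddle to an extremum, so the flow has zero topological entropy and consequently $h(f_0) = 0$. On the other hand, the linearized flow at $p$ has real eigenvalues $\pm a$ with $a > 0$ determined by the Hessian of $H$ at $p$; hence $p$ is a hyperbolic fixed point of $f_0$ with Lyapunov exponent $\lambda(p, f_0) = a > 0$, which we may take to be any prescribed positive real by rescaling $H$.

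Next, apply Newhouse's theorem (Theorem \ref{new1}) to extract a $C^1$-residual subset $\SB \subset \diff^1_m(S^2)$ on which every non-Anosov diffeomorphism satisfies $h(f) \geq s(f)$. Since $S^2$ admits no Anosov diffeomorphism, the inequality holds for every $f \in \SB$. By density of $\SB$, choose $f_n \in \SB$ with $f_n \to f_0$ in the $C^1$ topology. For $n$ large, the hyperbolic fixed point $p$ persists as a hyperbolic fixed point $p_n$ of $f_n$ (implicit function theorem), and continuity of the spectrum in the $C^1$ topology gives $\lambda(p_n, f_n) \to a$. Consequently $s(f_n) \geq \lambda(p_n, f_n) \geq a/2$ for large $n$, and combining this with $h(f_n) \geq s(f_n)$ yields $\liminf_{n \to \infty} h(f_n) \geq a/2 > 0 = h(f_0)$, so $h$ is not upper semicontinuous at $f_0$ in the $C^1$ topology.

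The main obstacle is engineering the initial $f_0$ so that zero topological entropy coexists with a hyperbolic fixed point of positive exponent: these features pull in opposite directions, since a hyperbolic saddle typically generates horseshoes through homoclinic tangles. The integrable Morse--Hamiltonian construction sidesteps this tension because the flow is constrained to level sets of $H$ (so there are no homoclinic intersections at all), while the Hessian of $H$ at $p$ independently controls the local expansion rate. The choice $M = S^2$ is purely a convenience: it removes the non-Anosov hypothesis of Theorem \ref{new1} from the argument, so Newhouse's lower bound applies unconditionally to every member of the approximating sequence.
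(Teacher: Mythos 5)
Your proposal is correct and follows essentially the same route as the paper: a time-one map of an integrable Hamiltonian flow on $S^2$ with a hyperbolic saddle gives $h(f_0)=0$ while $s(f_0)>0$, and Newhouse's generic lower bound (applicable to all of $\mathcal{B}$ since $S^2$ carries no Anosov diffeomorphism) forces nearby generic maps to have entropy bounded away from zero. The only cosmetic differences are that the paper justifies $h(f_0)=0$ by citing Young's theorem on entropy of surface flows (worth doing, since "all orbits are periodic or separatrices" is not by itself a proof), and it uses Theorem A together with lower semicontinuity of $s$ rather than tracking the persistence of the particular saddle.
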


\subsection{Symbolic Extensions}
Symbolic dynamics play a crucial role in ergodic theory. It is a challenging problem to know whether a dynamics can be codified. We can obtain
upper bounds for the entropy by means of symbolic dynamics: A dynamical systems $(M,f)$ have a {\it symbolic extension} if there exist a
subshift $(Y,\sigma)$ and a surjective map $\pi:Y\rightarrow M$ such that $\pi\circ\sigma=f\circ\pi$. $(Y,\sigma)$ is called an {\it extension}
of $(M,f)$ and $(M,f)$ a {\it factor} of $(Y,\sigma)$. And so, if the system has some symbolic extension we gain directly an upper bound of the
topological entropy. Nevertheless, such estimate could be extreme. This way, one extension is called  {\it principal extension} if the map $\pi$
is such that $h_{\nu}(\sigma)=h_{\pi_{*}\nu}(f)$ for every $\sigma-$invariant measure $\nu\in\mathcal{M}(\sigma)$ over $Y$, where
$h_{\nu}(\sigma)$ is the metric entropy of $\sigma$ with respect to $\nu$.

Boyle, D. Fiebig,  U. Fiebig \cite{Boyle}  proved that asymptotically $h-$expansive diffeomorphisms have a principal symbolic extension. By a
result of Buzzi \cite{Buzzi} every $C^{\infty}$ diffeomorphism of compact manifold is asymptotically entropy expansive and consequently have a
principal symbolic extension.  Also, recently  D. Burguet \cite{Burguet} showed that every $C^2$ surface diffeomorphism have symbolic
extensions. These results give a positive partial answer to the conjecture of Downarowicz and Newhouse which expects symbolic extension for any
$C^r$ ($r \geq 2$) diffeomorphism.

Let us mention that Diaz, Fisher, Pac\'{i}fico and Vieitez \cite{diazfisher} proved that every $C^1$ partially hyperbolic diffeomorphism with a
nonhyperbolic central bundle that splits in a dominated way into 1-dimensional sub-bundles is asymptotically $h-$expansive and therefore has a
principal symbolic extension. See also \cite{df}.

On the other hand, Downarowicz and Newhouse using Theorem \ref{new1} proved in \cite{Newhouse3} that far from Anosov diffeomorphisms, generic
area preserving diffeomorphisms in $C^1$ topology admit no symbolic extensions. We extend this result to higher dimensional symplectic
diffeomorphisms.

\begin{mainthm} \label{symbolic}
There is a residual subset $\mathcal{B}\subset \dw$ such that if $f\in\mathcal{B}$ is a non Anosov diffeomorphism then $f$ has no symbolic
extension.
\end{mainthm}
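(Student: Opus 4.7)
The plan is to adapt the argument of Downarowicz--Newhouse \cite{Newhouse3} from the area-preserving surface case to the higher-dimensional symplectic setting, with Theorem \ref{symplectic} playing the role of Newhouse's Theorem \ref{new1}. The strategy has three components: (i) start with the residual set provided by Theorem \ref{symplectic}, on which $h(f)\geq s(f)$; (ii) refine it so that every non-Anosov $f$ in the refinement has, inside every $C^1$-neighborhood, hyperbolic horseshoes at arbitrarily small spatial scales with topological entropy bounded below by a definite positive quantity; (iii) apply the entropy-structure criterion used in \cite{Newhouse3} to conclude the non-existence of a symbolic extension.

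I would take $\mathcal{B}_0\subset\dw$ to be the residual set from Theorem \ref{symplectic}. For positive integers $n,k,j$, define $\mathcal{U}_{n,k,j}$ as the set of $f\in\dw$ which are either Anosov or admit a $C^1$-perturbation $g$ at distance less than $\tfrac{1}{n}$ possessing a hyperbolic horseshoe of diameter less than $\tfrac{1}{k}$ with topological entropy at least $\tfrac{1}{j}$; these sets are open by persistence of hyperbolic basic sets under $C^1$-perturbations. The heart of the argument is the density of each $\mathcal{U}_{n,k,j}$; one then sets $\mathcal{B}:=\mathcal{B}_0\cap\bigcap_{n,k,j}\mathcal{U}_{n,k,j}$.

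For density, fix a non-Anosov $f$. By the mechanism underlying Theorem \ref{symplectic}, after an arbitrarily small $C^1$-perturbation one obtains a hyperbolic periodic orbit $\mathcal{O}(p)$ whose smallest positive Lyapunov exponent is close to $s(f)>0$ and whose period can be taken arbitrarily large. A further small symplectic perturbation, combining the symplectic connecting lemma with the Franks-type lemma already used in the proof of Theorem \ref{symplectic}, creates a transverse homoclinic intersection at $\mathcal{O}(p)$; Katok's horseshoe theorem then yields a hyperbolic horseshoe whose diameter is controlled by the size of a Pesin block around $\mathcal{O}(p)$ (hence less than $\tfrac{1}{k}$ for sufficiently long period) and whose entropy is comparable to $\tfrac{1}{\tau(p)}\log\lambda(p)$. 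With care the entropy can be arranged to stay above a fixed positive fraction of $s(f)$, which in particular exceeds $\tfrac{1}{j}$ once $j$ is large, yielding the required element of $\mathcal{U}_{n,k,j}$. The resulting family of small-scale horseshoes with uniformly positive entropy around every non-Anosov $f\in\mathcal{B}$ feeds directly into the Downarowicz criterion exploited in \cite{Newhouse3}, forcing $h_{sex}(f)=+\infty$ and ruling out any symbolic extension.

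The main obstacle is the higher-dimensional symplectic production of small-scale horseshoes with uniformly controlled entropy. In the two-dimensional area-preserving case of \cite{Newhouse3}, the surface geometry and classical Newhouse tangency theory suffice. In general symplectic dimension the directions transverse to the hyperbolic splitting along a periodic orbit are multi-dimensional, so one must ensure that the symplectic perturbation produces a genuinely transverse homoclinic intersection rather than a persistent tangency in some coisotropic direction, and one needs a lower bound on the horseshoe entropy that does not degenerate as the period grows and the spatial scale shrinks.
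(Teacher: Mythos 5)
Your high-level plan (adapt Downarowicz--Newhouse with the symplectic perturbation machinery replacing the surface arguments) is the right one, but the step where you ``feed the small-scale horseshoes into the Downarowicz criterion'' hides the actual content of the proof, and as stated it does not work. The criterion (Proposition \ref{prop prin}) is not ``arbitrarily small horseshoes with uniformly positive entropy exist''; it asks for a \emph{compact} set $\mathcal{E}\subset\mathcal{M}(f)$ and $\rho_0>0$ such that at \emph{every} $\mu\in\mathcal{E}$ and every $k$ one has $\limsup_{\nu\in\mathcal{E},\nu\to\mu}[h_\nu(f)-h_k(\nu)]>\rho_0$. Three things are needed that your construction does not supply. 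First, the high-entropy measures $\nu$ must converge weak-$*$ to the given $\mu_p$; a horseshoe of small diameter located near the orbit of $p$ does not give this --- one needs the whole orbit of the horseshoe to spend a fraction of time close to $1$ near the orbit of $p$ (item (c) of the Main Technical Proposition). Second, one needs $h_k(\nu)=0$, which is obtained by making the horseshoe zero-dimensional and \emph{subordinate} to the essential partitions $\alpha_k$, not merely of small diameter. Third, and most importantly, the condition must hold at every point of $\mathcal{E}$, including the new measures $\nu$ themselves; this forces the self-replicating structure $\nu\in\mathcal{E}$, which the paper secures by proving that every periodic point $q$ in the new horseshoe again satisfies $\chi(q,f)>\chi(p,f)-1/n>\chi(f)/2$ (item (d)), so that $\nu$ is a limit of periodic measures belonging to $\mathcal{E}_1$. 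Without this recursion the criterion cannot be verified and $h_{sex}(f)=\infty$ does not follow.

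Two further gaps. You invoke Katok's horseshoe theorem and Pesin blocks to get entropy ``comparable to $\tfrac{1}{\tau(p)}\log\lambda(p)$'', but Katok's theorem is a $C^{1+\alpha}$ result and is unavailable in the $C^1$-generic setting; moreover a transverse homoclinic point only yields a Smale horseshoe whose entropy $\tfrac{\log 2}{T}$ can be arbitrarily small. Getting entropy above $\chi(p,g)-\tfrac1n$ is exactly the hard quantitative step: the paper creates an \emph{interval} of homoclinic tangency (using that stable/unstable manifolds are Lagrangian, Lemma \ref{curva}), unfolds it with an $N$-legged snake perturbation, and proves the key estimate $A<K_1\min\{\|Dg_p^{-t}|E^u\|,\|Dg_p^{t}|E^s\|\}$ (Lemma \ref{afirma}) relating the number of legs $N$ to the return time $t$, so that $\tfrac1t\log N>\chi(p,g)-\tfrac1n$. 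Finally, your genericity bookkeeping is off: $\bigcap_{n,k,j}\mathcal{U}_{n,k,j}$ only guarantees that arbitrarily small \emph{perturbations} of $f$ carry the horseshoes, not that $f$ itself does; one must instead show, as in Lemma \ref{lema principal}, that the set of $f$ which \emph{themselves} robustly satisfy the property ($\mathcal{S}_n$) is open and dense in the complement of the closure of the Anosov ones.
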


Now, using this result we be able to give an easy and short prove of the stability conjecture in symplectic scneario. We say that a symplectic
diffeomorphism is structurally stable if there is some neighborhood $\mathcal{U}$ of $f$ in $\diff^1_{\omega}(M)$ such that every diffeomorphism
$g\in \mathcal{U}$ is topologically conjugated to $f$, i.e., there is one homeomorphism $h$ over $M$ such that $hf=gh$.

\begin{corollary}
A diffeomorphism $f\in \diff^1_{\omega}(M)$ is structurally stable if, and only if, $f$ is Anosov.
\end{corollary}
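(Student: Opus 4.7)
The plan is to prove the two implications separately: the forward direction (Anosov $\Rightarrow$ structurally stable) is classical, and the reverse direction is where Theorem \ref{symbolic} does the work. For the forward direction I would simply cite Anosov's theorem: the Anosov property is $C^1$-open, so every small symplectic perturbation of an Anosov $f$ is again Anosov, and the conjugating homeomorphism is produced by the standard graph-transform argument on the hyperbolic splitting. Nothing in that construction requires leaving the symplectic category.

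For the nontrivial direction I would argue by contradiction. Suppose $f\in\dw$ is structurally stable but not Anosov, and let $\mathcal{U}\subset\dw$ be a $C^1$-open neighborhood of $f$ in which every diffeomorphism is topologically conjugate to $f$. Since being Anosov is $C^1$-open, after shrinking $\mathcal{U}$ I may assume $\mathcal{U}$ contains no Anosov diffeomorphism. Theorem \ref{symbolic} supplies a residual set $\mathcal{B}\subset\dw$ of non-Anosov diffeomorphisms admitting no symbolic extension; since $\mathcal{U}$ is open, $\mathcal{B}\cap\mathcal{U}$ is residual in $\mathcal{U}$, in particular nonempty. Pick any $g\in\mathcal{B}\cap\mathcal{U}$. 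The existence of a symbolic extension is a topological-conjugacy invariant: if $h\circ f=g\circ h$ and $\pi\colon(Y,\sigma)\to(M,g)$ is a symbolic extension of $g$, then $h^{-1}\circ\pi$ is one for $f$, and the converse is obtained by replacing $h$ with $h^{-1}$. Consequently $f$ itself admits no symbolic extension.

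To reach a contradiction, I would invoke Zehnder's theorem that $\diff^{\infty}_{\omega}(M)$ is $C^1$-dense in $\dw$, which produces a $C^{\infty}$ symplectic diffeomorphism $g'\in\mathcal{U}$. By Buzzi's theorem $g'$ is asymptotically $h$-expansive, and by the Boyle--Fiebig--Fiebig result it therefore admits a principal symbolic extension. Transporting this symbolic extension through the conjugacy between $g'$ and $f$ provides a symbolic extension of $f$, contradicting the conclusion of the previous paragraph. The only step that is not completely formal is the observation that symbolic extensions transfer under topological conjugacy, which follows immediately from the definition. I do not foresee any serious obstacle in this argument; Theorem \ref{symbolic}, combined with Zehnder's density of $C^{\infty}_{\omega}$ and Buzzi's theorem, was essentially tailor-made to make this corollary routine.
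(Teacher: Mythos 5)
Your overall route is the same as the paper's: Zehnder's density of $C^{\infty}$ symplectic diffeomorphisms, Buzzi plus Boyle--Fiebig--Fiebig to get a principal symbolic extension for a smooth $g'\in\mathcal{U}$, the (correct) observation that symbolic extensions transfer through topological conjugacy, and Theorem \ref{symbolic} to produce the contradiction. The difference is that you run it by contradiction and need to exhibit a specific \emph{non-Anosov} $g\in\mathcal{B}\cap\mathcal{U}$, and this is where there is a genuine gap: the step ``since being Anosov is $C^1$-open, after shrinking $\mathcal{U}$ I may assume $\mathcal{U}$ contains no Anosov diffeomorphism'' is false as justified. Openness of the Anosov set $\mathcal{A}$ means its complement is \emph{closed}, not open; a non-Anosov $f$ may perfectly well lie on the boundary of $\mathcal{A}$ and be a $C^1$-limit of Anosov diffeomorphisms, in which case every neighborhood $\mathcal{U}$ of $f$ meets $\mathcal{A}$. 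Without that exclusion, the element $g\in\mathcal{B}\cap\mathcal{U}$ you select may be Anosov, Theorem \ref{symbolic} says nothing about it, and the contradiction evaporates. As written, your argument only rules out structurally stable non-Anosov $f$ lying outside the closure of $\mathcal{A}$ (where the shrinking is legitimate because the complement of a closed set is open); the boundary case $f\in\operatorname{Cl}(\mathcal{A})\setminus\mathcal{A}$ needs a separate treatment.

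For comparison, the paper argues in the direct form: structural stability plus Zehnder gives that \emph{every} diffeomorphism in $\mathcal{U}$ is conjugate to a smooth one and hence admits a principal symbolic extension, and then Theorem \ref{symbolic} is invoked to conclude $f$ is Anosov. This avoids asserting that $\mathcal{U}$ is free of Anosov maps, though it is itself terse at the same delicate point (what Theorem \ref{symbolic} directly yields is that the non-Anosov elements of $\mathcal{B}\cap\mathcal{U}$ are absent, i.e.\ that Anosov diffeomorphisms are dense in $\mathcal{U}$). If you want to keep your contradiction scheme, you should either restrict to the case $f\notin\operatorname{Cl}(\mathcal{A})$ and handle the boundary case by another mechanism, or replace the false shrinking step by the paper's density argument.
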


\proof Suppose $f\in \diff^1_{\omega}(M)$ is  a structurally stable diffeomorphism. Now, by Zehnder \cite{zehnder}, smooth diffeomorphisms are
dense among the symplectic ones and since $C^{\infty}$ diffeomorphisms have a principal symbolic extension, every diffeomorphism in some
neighborhood of $f$ also has a principal symbolic extension. So, accordingly to Theorem \ref{symbolic} this is only possible if $f$ is Anosov.

$\hfill\square$

\begin{remark}
The proof of the above corollary is based on Theorem \ref{symbolic}, but to prove it we use the unfolding of homoclinic tangency outside Anosov
set that happens in the symplectic scenario,  which is by itself an obstruction of stability. Nevertheless, we would like to emphasize that a
priori non existence of symbolic extensions does not have direct relation to homoclinic tangencies. So, we conclude that ``any mechanism in
$C^1-$topology" which yields non existence of symbolic extensions implies non structural stability.

However, very recently we were informed  that G.Liao, J.Yang and  M. Viana \cite{GYV} proved that diffeomorphisms $C^1$-far from tangencies have symbolic extensions.

\end{remark}

This paper is organized as follows.  In section 2  we will prove some generic results relating topological entropy with Lyapunov exponents of
periodic points, in particular, we prove Theorem A.  In section 3,  we prove Theorems B and C using a main technical proposition ( Proposition
\ref{proposição princial}).  In section 4, the main technical proposition is proved and finally in section 5 we give examples of (upper semi)
discontinuity points for topological entropy.

\section{Entropy and Lyapunov exponents of periodic points}

In this section firstly we review some background definitions and results, and prove an strict upper bound for the entropy in a $C^1-$open and
dense subset of Anosov volume preserving diffeomorphisms. After, using results of Abdenur, Bonatti and Crovisier \cite{abc} we prove an upper
bound for entropy of $C^1-$generic volume preserving diffeomorphisms of compact manifolds (any dimension) and apply it to prove Theorem
\ref{equality}.

\subsection{Preliminary definitions}

Given $f\in\dw$ and a hyperbolic periodic point $p$ of $f$, we denote by $\chi(p,f)$ the smallest positive Lyapunov exponent for the hyperbolic
periodic point $p$ of $f$, i.e., $ \chi(p,f)=1/\tau(p,f) \,\log \lambda(p,f), $ where $\lambda(p,f)= (\sigma(Df^{-\tau(p,f)}|E^u))^{-1}$, being
$\sigma$ the spectral radio of the map, and as before $\tau(p,f)$ is the minimum period of the hyperbolic periodic point $p$. In fact, defining
$$\mu_p=\frac{1}{\tau(p,f)}\sum_{i=0}^{\tau(p)-1}\delta_{f^i(p)},$$ as the {\it periodic measure} for $p$, where $\delta_{f^i(p)}$ is the dirac
measure for $f^i(p)$, we have that $\chi(p,f)$ is the smallest positive lyapunov exponent for the ergodic measure $\mu_p$.

Now, given $n\in \N$ we consider $ s_n(f)=\max\left\{\chi(p,f); \quad p\in H_n(f)\right\}, $ where $H_n(f)$ is the set of hyperbolic periodic
points of period smaller or equal than $n$. Since $H_n(f)\subset H_{n+1}(f)$, we have $s_n(f)\leq s_{n+1}(f)$, and then it's well defined $
s(f)=\lim_{n\rightarrow\infty}s_n (f). $ From robustness of the hyperbolic periodic points we have that the functional $s_n$ is continuous for
every $n\in\N$, which implies that $s(f)$ is lower semicontinuous.

\subsection{Generic upper bound for entropy of Anosov difeomorphisms}

In the setting of volume preserving Anosov diffeomorphisms the scenario is much more clear.
\begin{proposition} \label{anosov}
There exists a $C^1-$open and dense subset $\mathcal{F}$ of volume preserving Anosov diffeomorphisms such that for any $f \in \mathcal{F}$ with
$\dim(E^u) =u$ we have
$$
h(f) < \sup_{p \in Per(f)} \sum_{i=1}^{u} \chi^+_i(p)
$$
where the sum is over all positive Lyapunov exponents of $p.$
\end{proposition}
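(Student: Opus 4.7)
The plan is to show that the set
\[
\mathcal{F} := \left\{ f \in \dm : f \text{ is Anosov and } h(f) < \textstyle\sup_{p \in \per(f)} \sum_{i=1}^{u} \chi_i^+(p) \right\}
\]
is $C^1$-open and dense among volume-preserving Anosov diffeomorphisms.

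I first establish the weak inequality $h(f) \leq \sup_{p} \sum_{i=1}^u \chi_i^+(p)$ for every Anosov $f \in \dm$. Since the unstable bundle $E^u$ is continuous and globally defined on $M$, the function $\psi(x) := \log|\det Df|_{E^u(x)}|$ is continuous, and for any invariant ergodic measure $\mu$ the sum of positive exponents equals $\int \psi\,d\mu$. Ruelle's inequality gives $h_\mu(f) \leq \int \psi\,d\mu$, and by the specification property the periodic measures are weak-$*$ dense in $\mathcal{M}(f)$. Continuity of $\psi$ together with the variational principle then yields
\[
h(f) = \sup_\mu h_\mu(f) \leq \sup_\mu \int \psi\,d\mu = \sup_p \int \psi\,d\mu_p = \sup_p \textstyle\sum_{i=1}^{u} \chi_i^+(p).
\]

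Openness of $\mathcal{F}$ follows from two observations: structural stability of Anosov makes $f \mapsto h(f)$ locally constant on the Anosov locus, while the functional $s(f) := \sup_p \sum_{i=1}^u \chi_i^+(p)$ is lower semicontinuous on $\dm$ (it is the supremum over $n$ of the continuous functionals $s_n$, continuity coming from persistence of hyperbolic periodic orbits under $C^1$-perturbation). For density, given an Anosov $f$ with $h(f) = s(f)$ and $\varepsilon_0 > 0$, I will produce $g \in \mathcal{F}$ with $d_{C^1}(f,g) < \varepsilon_0$. Fix $\eta > 0$ small enough that the conservative version of Franks' lemma (Bonatti--D\'{\i}az--Pujals; Horita--Tahzibi) turns an $\eta/u$-sized modification of the derivative along a periodic orbit into a $C^1$-$\varepsilon_0$-small volume-preserving perturbation. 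Since $h(f) = \sup_p \sum \chi_i^+(p,f)$, choose a hyperbolic periodic point $p_0$ with $\sum_{i=1}^u \chi_i^+(p_0,f) > h(f) - \eta/2$, and apply the lemma along the orbit of $p_0$, post-composing $Df$ at each $f^j(p_0)$ with a volume-preserving linear map stretching $E^u$ by a factor $1+\eta/u$ and compressing $E^s$ accordingly. The resulting $g \in \dm$ is Anosov (for small enough perturbation) with $h(g) = h(f)$ (structural stability), and a direct computation gives
\[
\textstyle\sum_{i=1}^u \chi_i^+(p_0(g), g) = \sum_{i=1}^u \chi_i^+(p_0, f) + u\log(1+\eta/u) > h(g)
\]
for $\eta$ small, whence $g \in \mathcal{F}$.

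The main technical obstacle is the quantitative conservative Franks' lemma: realizing prescribed volume-preserving modifications of $Df$ along a finite orbit as the derivative of a genuine $C^1$-small, volume-preserving diffeomorphism, while retaining the Anosov property. The linear-algebraic flexibility required---stretching the $u$-dimensional unstable subspace and compressing the $(\dim M - u)$-dimensional stable subspace with determinant one---uses $0 < u < \dim M$, and persistence of hyperbolicity under $C^1$-small perturbations is standard, so both ingredients are available in the $C^1$-conservative setting.
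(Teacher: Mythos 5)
Your argument is correct, but it reaches the strict inequality by a genuinely different mechanism than the paper. The paper's proof is thermodynamic: it first regularizes $f$ to a $C^2$ Anosov diffeomorphism (Avila), uses that Lebesgue measure $m$ is the unique equilibrium state for $\phi^u=-\log J^u$, perturbs so that the measure of maximal entropy $\mu$ is singular with respect to $m$, and then extracts strictness from the strict variational inequality $0=P_f(\phi^u)=h_m(f)+\int\phi^u\,dm> h_\mu(f)+\int\phi^u\,d\mu$, combined with Bowen's periodic approximation of $\mu$ to bound $\int-\phi^u\,d\mu$ by the supremum over periodic orbits. You instead prove the non-strict bound $h(f)\le\sup_p\sum\chi_i^+(p)$ for \emph{every} conservative Anosov $f$ (Ruelle plus Sigmund's density of periodic measures plus continuity of $\log|\det Df|_{E^u}|$), and then force strictness by a quantitative conservative Franks'-lemma perturbation at a near-optimal periodic orbit, stretching $E^u$ by $1+\eta/u$ and compressing $E^s$ to keep determinant one, while $h$ stays fixed by structural stability. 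Your route stays entirely in the $C^1$ category and avoids Avila's regularization, uniqueness of equilibrium states, and the (somewhat glossed-over in the paper) claim that one can perturb so the entropy-maximizing measure becomes singular with respect to Lebesgue; the price is the quantitative conservative Franks' lemma with perturbation size independent of the period, which is exactly the tool the paper itself invokes elsewhere (for Proposition 2.4, via the references to Liang--Liu--Sun and Arbieto--Catalan), so it is legitimately available. The openness step is the same in both proofs (local constancy of $h$ on the Anosov locus plus lower semicontinuity of $f\mapsto\sup_p\sum\chi_i^+(p)$). As a minor byproduct, your weak inequality shows the paper's proposition is sharp in the sense that density, not openness, is where the strictness must be manufactured.
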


\begin{proof}

Take any volume preserving Anosov diffeomorphism $f$. After a small $C^1-$perturbation, if necessary, we can assume that $f$ is a $C^2$ Anosov
diffeomorphism. This regularization result is due to Avila \cite{avila2008}. Now, we know that for volume preserving Anosov diffeomorphisms the
Lebesgue measure $m$ is the unique ergodic e\-qui\-li\-brium state for the potencial $\phi^u(.)=-\log J^u(f)$ where $J^u(f) := |det Df|E^u(f)|.$
Recall that the entropy maximizing measure is just the e\-qui\-li\-brium state for the identically zero potential. Using Bowen's result it is
clear that $\mu$, the entropy maximizing measure, coincides with Lebesgue if, and only if, the potential $\phi^u$ is cohomologous to a constant
function. So, perturbing $f$ in $C^1-$topology we can assume that $\mu$ is singular with respect to the Lebesgue measure.

Following, recall that in Bowen's approach the entropy maximizing measures are obtained as the limit of periodic distributions. That is $$ \mu_n
:= \frac{\sum_{p \in Per_n (f)} \delta_p}{\# Per_n(f)} \rightarrow \mu$$

Since $\phi^u(.)$ is a continuous function we have $\int -\phi^u(x) d\mu(x) = \lim_{n \rightarrow \infty} \int -\phi^u (x) d \mu_n(x)$ and by
definition we conclude that $ \int -\phi^u d\mu_n \leq \sup_{p \in Per_n(f)} \sum_{i=1}^{u} \chi^+_i(p). $
 Then, $$ \int -\phi^u d\mu \leq \sup_{p \in Per(f)} \sum_{i=1}^{u} \chi^+_i(p).$$

Now, provided $f$ is Anosov the pressure of $\phi^u (.)$ is zero. Hence,
\begin{align*}
0=P_f(\phi^u)&=h_m(f)+ \int \phi^u \;dm
\\
&> h_\mu(f)+\int \phi^u \; d\mu
\\
&=h(f) + \int \phi^u \; d\mu.
\end{align*}
So it came out that
$$
h(f) <\int -\phi^u\; d\mu \leq \sup_{p \in Per(f)} \sum_{i=1}^{u} \chi^+_i(p).
$$

Finally, we claim that any $C^1-$perturbation of $f$ also satisfies a similar ine\-qua\-li\-ty. Indeed, as entropy is locally constant (by
structural stability of Anosov diffeomorphisms) and $f \rightarrow \sup_{p \in Per(f)} \sum_{i=1}^{u} \chi^+_i(p)$ is a lower semi continuous
function we conclude that for any $g$ $C^1-$close enough to $f$ we have
$$
 h(g) < \sup_{p \in Per(g)} \sum_{i=1}^{u} \chi^+_i(p).
$$

\end{proof}

\subsection{Generic upper bounds  for entropy and proof of Theorem A}

Let $f$ be a $C^{1+\alpha}$ diffeomorphism on a compact manifold and $\mu$ an ergodic hyperbolic measure. Then Katok proved, see \cite{katok},
that there exists a sequence of periodic points $p_n$ such that dirac measures on the orbit of $p_n$ converge to $\mu$ and the Lyapunov
exponents of $p_n$ converge to the Lyapunov exponent of $\mu.$ By variational principle $h (f) = \sup_{\mu} h_{\mu}(f)$ where the supremum is
over all f-invariant ergodic probability measures. By Ruelle's inequality,   $h_{\mu}(f) \leq \sum \chi_i^+$ where the sum is over all positive
Lyapunov exponents of $\mu$. Suppose that the supremum in the variational principle can be taken over hyperbolic measures. Then we conclude that
$$
h(f) \leq \sup_{p \in Per(f)} \sum \chi_i^+(p)
$$
Using Abdenur,Bonatti and Crovisier's ideas we show it is the case for $C^1-$generic volume preserving diffeomorphisms.

Theorem A will be a consequence of Theorem \ref{new1} and the following theorem.

\begin{theorem}
There exists a residual subset $\mathcal{R}\subset \diff_m^1(M)$ ($M$ of any dimension $d$) such that for any $f \in \mathcal{R}$
$$
h(f) \leq \sup_{p \in Per(f)} \sum_{i=1}^{n_p} \chi^+_i(p)
$$
where the sum is over all positive Lyapunov exponents of the periodic point $p$, counting multiplicity. \label{theoA1}\end{theorem}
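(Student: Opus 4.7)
The plan is to combine the variational principle, Ruelle's inequality, and a strong periodic approximation theorem in the $C^1$-generic conservative setting due to Abdenur, Bonatti and Crovisier \cite{abc}, following the three-step template already indicated in the discussion preceding the statement.

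First, I would fix $\epsilon > 0$ and, by the variational principle and ergodic decomposition, choose an ergodic $f$-invariant measure $\mu$ with $h_\mu(f) > h(f) - \epsilon$. Ruelle's inequality gives
$$
h_\mu(f) \;\leq\; \sum_{i=1}^{n_\mu} \chi_i^+(\mu),
$$
the sum being over all positive Lyapunov exponents of $\mu$ counted with multiplicity.

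Second, the decisive step is to approximate $\mu$ by a periodic orbit $p$ whose positive Lyapunov exponents essentially reproduce $\sum_i \chi_i^+(\mu)$. In the $C^{1+\alpha}$ regime this is Katok's approximation theorem, which however requires hyperbolicity of $\mu$. For $C^1$-generic conservative $f$ one uses instead the tools of \cite{abc}, grounded in the conservative version of Ma\~n\'e's ergodic closing lemma (Arnaud, Bonatti--Crovisier): there is a residual $\mathcal{R}\subset\dm$ on which every ergodic measure $\mu$ is a weak-$\ast$ limit of periodic measures $\mu_{p_n}$ whose full Lyapunov spectrum converges to that of $\mu$, and in particular $\sum_i \chi_i^+(\mu_{p_n}) \to \sum_i \chi_i^+(\mu)$. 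Combining this with the previous step yields, for $n$ large,
$$
\sup_{p\in\per(f)}\sum_{i=1}^{n_p}\chi_i^+(p) \;\geq\; \sum_i \chi_i^+(\mu_{p_n}) \;\geq\; h_\mu(f)-\epsilon \;\geq\; h(f)-2\epsilon,
$$
and letting $\epsilon\to 0$ proves the theorem; the residual set of the statement is $\mathcal{R}$, intersected if one wishes with the conservative Kupka--Smale residual so that every periodic orbit is automatically hyperbolic.

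The main obstacle is the strength of convergence required in the periodic approximation, because the map $\mu \mapsto \sum_i \chi_i^+(\mu)$ is only upper semicontinuous in the weak-$\ast$ topology and that is the wrong direction for the inequality I need. The substantive content of the ABC machinery is exactly to upgrade weak-$\ast$ approximation by periodic measures to approximation in Lyapunov data, using perturbations that create hyperbolic periodic orbits whose invariant measures shadow the Oseledets data of $\mu$. A secondary subtlety is the case where $\mu$ has zero Lyapunov exponents: an exponent equal to zero may split into a small positive or small negative exponent along $\mu_{p_n}$, but in either case the contribution to the positive-part sum tends to $0$, so $\sum \chi_i^+(\mu_{p_n}) \to \sum \chi_i^+(\mu)$ still holds and the argument goes through unchanged.
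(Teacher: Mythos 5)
Your proposal is correct and follows essentially the same route as the paper: variational principle plus Ruelle's inequality, then the Abdenur--Bonatti--Crovisier periodic approximation with convergence of the full Lyapunov vector (adapted to the conservative setting via the ergodic closing lemma and Franks' lemma), which is exactly the paper's Proposition on periodic approximation. Your remarks on why weak-$\ast$ convergence alone would not suffice and on the zero-exponent case are accurate refinements of the same argument.
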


Let us see first, how to prove Theorem A.  Take $\mathcal{R}$ as the residual subset derived from the intersection of the ones given by Theorem
\ref{new1} and Theorem \ref{theoA1}. If the supremum in Theorem \ref{theoA1} was taken over hyperbolic periodic points then this sup in
dimension two would be equal $s(f)$ and then Theorem A was proved. In order to overcome this, we divide the proof in two cases. Given a
diffeomorphism $f\in\mathcal{R}$, if $h(f)=0$ then  we have equality by Theorem \ref{new1} ($s(f)\geq 0$), i.e., $h(f)=s(f)$. On the other
hand,  since in dimension two for a periodic point of a conservative diffeomorphism be hyperbolic it's enough  to have positive lyapunov
exponent, if $h(f)>0$ we have that in Theorem \ref{theoA1} the supremum is in fact over the hyperbolic periodic points, and then, we also have
equality between $h(f)$ and $s(f)$, which concludes Theorem A.

In the sequence we prove Theorem \ref{theoA1}.

Using Abdenur, Bonatti and Crovisier \cite{abc}, we can prove the following Proposition.

\begin{proposition}
There is a residual subset $\mathcal{R}\subset \diff_m^1(M)$ such that if $f\in \mathcal{R}$ and $\mu$ is an ergodic measure for $f$, then there
are periodic measures $\mu_{p}$ converging  to $\mu$ in the weak topology, and moreover the vectors formed by the lyapunov exponentes of
$\mu_p$, $L(\mu_p)\in \R^d$, also converge to the Lyapunov vector $L(\mu)\in \R^d$. \label{proA1}\end{proposition}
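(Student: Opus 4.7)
The plan is to adapt the main theorem of Abdenur--Bonatti--Crovisier \cite{abc} from the dissipative setting to the conservative setting $\dm$. Their argument proceeds in two stages, and both have volume-preserving analogues in the literature: first, approximation of ergodic measures by periodic ones in the weak-$*$ topology via an ergodic closing lemma; second, sharpening this approximation so that the Lyapunov vectors also converge, via a Franks-type perturbation along the approximating orbit.

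First I would establish weak-$*$ approximation by invoking the conservative version of Ma\~n\'e's ergodic closing lemma (Arnaud's theorem for volume-preserving $C^1$ diffeomorphisms). This yields, for each ergodic $\mu$, after an arbitrarily small conservative $C^1$-perturbation, periodic points $p_n$ of arbitrarily high period with $\mu_{p_n} \to \mu$ in the weak-$*$ topology.

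Next I would show that the Lyapunov vectors can also be forced to converge. By Oseledets's theorem, one can select a $\mu$-typical point $x$ and times $n_k \to \infty$ at which $x$ is an $(\eps,n_k)$-Pliss point, so that the finite-time Lyapunov vector of $Df^{n_k}(x)$ is $\eps$-close to $L(\mu)$. The closing lemma furnishes a periodic orbit of period $\approx n_k$ shadowing this piece of orbit, and then a further small conservative perturbation, given by the conservative Franks' lemma (as in Bonatti--Diaz--Pujals and Horita--Tahzibi), allows one to realize along this orbit any linear cocycle whose values are $\eps$-close to $Df$ at each point. Choosing the perturbed cocycle so that its product equals the finite-time derivative $Df^{n_k}(x)$, the Lyapunov vector of the resulting periodic orbit becomes $2\eps$-close to $L(\mu)$, while $\mu_p$ remains close to $\mu$ in weak-$*$ topology.

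Finally, a standard Baire argument converts density into residuality. Fix a countable basis $\{W_n\}$ of open sets in $\mathcal{M}(f)\times\R^d$. For each $n$ and each $N\in\N$, let $\mathcal{R}_{n,N}$ be the set of $f\in\dm$ such that for every ergodic $\mu$ with $(\mu,L(\mu))\in W_n$ there exists a periodic orbit of period $\geq N$ with $(\mu_p,L(\mu_p))\in W_n$; these sets are open by robustness of hyperbolic periodic orbits and continuous dependence of their Lyapunov vectors, and dense by the preceding step. The intersection $\mathcal{R} = \bigcap_{n,N}\mathcal{R}_{n,N}$ is the required residual set. The main obstacle is the second step, namely carrying out the Franks-type perturbation inside $\dm$ while keeping it supported away from the finitely many continuous test functions defining the chosen weak-$*$ neighborhood, so that the weak-$*$ proximity already achieved is not destroyed; this is precisely where the conservative Franks' lemma is essential.
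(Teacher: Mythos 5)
Your proposal follows essentially the same route as the paper: the authors also reduce the statement to Abdenur--Bonatti--Crovisier's perturbative Proposition 6.1 (weak-$*$ approximation plus convergence of Lyapunov vectors), substitute the conservative ergodic closing lemma of Arnaud and the conservative Franks' lemma for their dissipative counterparts, and then pass from the dense perturbative statement to a residual set by a standard Baire/semicontinuity argument. Your sketch is correct and, if anything, supplies more detail on the Franks-lemma step than the paper does.
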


In fact, they proved this result for dissipative diffeomorphisms, Theorem 3.8 in \cite{abc}. But, unless generic arguments which are also true
in the conservative setting, their theorem is a consequence of Proposition 6.1 there, which we state  here for simplicity.

\begin{proposition}
Let $\mu$ be an ergodic invariant probability measure of a diffeomorphism $f$ of a compact manifold $M$. Fix a $C^1-$neighborhood $\mathcal{U}$
of $f$, a neighborhood $\mathcal{V}$ of $\mu$ in the space of probability measures with the weak topology, a Hausdorff neighborhood
$\mathcal{K}$ of the support of $\mu$, and a neighborhood $\mathcal{O}$ of $L(\mu)$ in $\R^d$. Then there is $g\in \mathcal{U}$ and a periodic
point $p$ of $g$ such that the Dirac measure $\mu_{p}$ associated to $p$ belongs to $\mathcal{V}$, its support belongs to $\mathcal{K}$, and its
Lyapunov vector $L(\mu_p)$ belongs to $\mathcal{O}$. \label{proABC}\end{proposition}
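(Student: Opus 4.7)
The plan is to approximate $\mu$ by a periodic orbit of a $C^1$-nearby perturbation $g$ while controlling not only the empirical measure and the support of that orbit but also the \emph{entire} Lyapunov vector. The strategy combines three ingredients: selection of a long orbit segment starting at a point that is simultaneously Birkhoff- and Oseledets-regular, a Pliss-type selection of the closing instant tailored to the Oseledets splitting, and a conservative refinement of Ma\~n\'e's ergodic closing lemma (together with a Franks-style linear perturbation) that allows the closing correction to be written in block form with respect to that splitting.

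First I would fix a full-$\mu$-measure set $R\subset \text{supp}(\mu)$ of points that are simultaneously Birkhoff-regular for a countable family of continuous test functions generating the weak topology near $\mu$, Oseledets-regular with splitting $T_xM=\bigoplus_i E_i(x)$ realizing $L(\mu)\in\R^d$, and Poincar\'e-recurrent. Fix $x\in R$ and $\eta>0$. For all sufficiently large $N$, the empirical measure $\frac{1}{N}\sum_{k=0}^{N-1}\delta_{f^k(x)}$ lies in $\mathcal{V}$, the orbit segment is contained in $\mathcal{K}$, and the singular values of $Df^N(x)$ computed in parallel-transported Oseledets charts lie within $\eta N$ of $N\cdot L(\mu)$. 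Apply a Pliss-type lemma to each scalar cocycle $\log\|Df|E_i\|$ to pass to a positive-density subset $\mathcal{N}\subset\N$ of indices $N$ at which, \emph{simultaneously} for every Oseledets block $E_i$, every intermediate partial product behaves like the full average to within $\eta$. Intersect $\mathcal{N}$ with the set of return times of $x$ into a prescribed small neighborhood of itself, and fix one such $N$.

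Now I would apply the conservative version of Ma\~n\'e's ergodic closing lemma, combined with a conservative Franks-type perturbation, to produce $g\in\mathcal{U}$, obtained from $f$ by a $C^1$-small perturbation supported in a small ball around $f^N(x)$, together with a periodic point $p$ of $g$ of period $N$ whose $g$-orbit $\eps$-shadows $(x,f(x),\dots,f^{N-1}(x))$. For $\eps$ small enough, $\mu_p\in\mathcal{V}$ and the support of $\mu_p$ sits in $\mathcal{K}$. The main obstacle is then to verify $L(\mu_p)\in\mathcal{O}$: the derivative $Dg^N(p)$ differs from $Df^N(x)$ only by an $O(1)$ linear correction inserted at the closing window, but a priori such a correction may mix the Oseledets blocks and thereby distort individual exponents. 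The Pliss selection is what rescues this step: one chooses the correction, via the conservative Franks lemma, in block-diagonal form with respect to the parallel-transported splitting at $f^N(x)$, so that each diagonal block contributes only $O(1)$ to the logarithm of the corresponding factor of $Dg^N(p)$. Dividing by $N$ and letting first $\eta\to 0$ and then $\eps\to 0$ forces $L(\mu_p)\to L(\mu)$, placing $L(\mu_p)$ inside $\mathcal{O}$. Aligning the closing perturbation with the Oseledets splitting while remaining inside $\diff_m^1(M)$ (or $\dw$) is the technical heart of the argument, and this is what the corresponding perturbation apparatus in Abdenur--Bonatti--Crovisier is designed to deliver.
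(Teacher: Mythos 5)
The paper does not actually prove this proposition: it is Proposition 6.1 of Abdenur--Bonatti--Crovisier, and the authors only remark that its proof (their Lemmas 6.2 and 6.3) uses the ergodic closing lemma and Franks' lemma, both of which are available in the conservative/symplectic setting, so the same argument goes through. You are therefore reconstructing the ABC argument from scratch, and while your toolkit is the right one (Oseledets-regular starting point, ergodic closing lemma, Franks-type control of the derivative along the closed orbit), the step where you control the Lyapunov vector of the periodic orbit has a genuine gap. First, a repairable inaccuracy: Ma\~n\'e's (or Arnaud's) ergodic closing lemma does not produce a perturbation supported in a small ball around $f^N(x)$; the perturbation lives along a neighborhood of the whole orbit segment, so the identity ``$Dg^N(p)=O(1)\ \text{correction} \times Df^N(x)$'' already requires an extra Franks step along all $N$ iterates to reset the derivative to the original cocycle. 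The serious problem is the claimed fix for block mixing. The Oseledets splitting is only measurable, so at a closing time $N$ with $f^N(x)$ near $x$ the splitting at $f^N(x)$ need not be anywhere near the splitting at $x$; choosing the closing correction ``block-diagonal with respect to the (transported) splitting at $f^N(x)$'' then does not make the return derivative preserve any splitting, and its eigenvalues are not read off from the diagonal blocks. Concretely, in the area-preserving surface case with exponents $\pm\lambda$, if at the chosen return time $E^u(f^N(x))$ is transported onto (approximately) $E^s(x)$, the return derivative is approximately a quarter-turn composed with $\mathrm{diag}(e^{N\lambda},e^{-N\lambda})$, whose eigenvalues have modulus $1$; the periodic orbit then has Lyapunov vector near $0$, far from $L(\mu)$. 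No correction implementable by Franks' lemma rescues this: Franks only allows $\delta$-small changes of the derivative at each iterate, and small rotations interleaved with strongly hyperbolic factors do not compose to the large rotation needed to realign the two splittings, while your Pliss selection only controls intra-block growth along the segment, not the alignment of the splitting at its two ends.

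The missing idea --- which is exactly what ABC's Lemmas 6.2--6.3 are organized around --- is to select both the starting point and the closing time so that the Oseledets data itself nearly recurs: take $x$ in a compact set $K$ of measure close to $1$ on which (by Lusin/Egorov) the splitting varies continuously and the finite-time exponents converge uniformly, and close the orbit at a return of $x$ to $K$ near $x$ (such returns are compatible with the ergodic closing lemma since $K$ has large measure). Then the return derivative, in charts adapted to the splitting at $x$, is almost block-diagonal with blocks separated by gaps of order $e^{N(\lambda_i-\lambda_{i+1})}$, which beat the $o(N)$ correction and the small misalignment, so its spectrum splits block by block and each eigenvalue modulus is within $o(N)$ of $N\lambda_i$. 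Without this recurrence-of-the-splitting selection (or some equivalent device), the argument as you wrote it does not yield $L(\mu_p)\in\mathcal{O}$.
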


They divided the proof of this proposition in two lemmas, Lemma 6.2 and Lemma 6.3 there. In the first one, given an ergodic measure $\mu$ for
$f$ they found strategic periodic points $p_n$ of some diffeomorphisms $f_n\in \mathcal{U}$ with good properties. Then, in Lemma 6.3 they proved
that in fact the Lyapunov vectors  $L(\mu_{p_n})$ converge to $L(\mu)$.

Hence, since we want this result in the conservative world, given some conservative diffeomorphism $f$ we need to find conservative
diffeomorphisms $f_n$ in any neighborhood of $f$ with same good properties as in their Lemma 6.2. Fortunately, since we have the ergodic closing
lemma, see \cite{Arnaud}, and Frank's lemma, see \cite{LLS} and \cite{Arbieto-catalan}, in the conservative scenario, the proof of a
conservative version of Proposition \ref{proABC} is exactly the same.

Now, using Proposition \ref{proA1} we prove Theorem \ref{theoA1}.

{\it Proof of Theorem \ref{theoA1}:} Let $f \in \mathcal{R}$, where $\mathcal{R}$ is the residual set as in Proposition \ref{proA1}. Given any
$\eps>0$, by variational principle there is an ergodic measure $\mu\in \mathcal{M}(f)$ such that
$$
h(f)< h_{\mu}(f)+\eps.
$$
By Ruelle's inequality $h_{\mu}(f) \leq \sum \chi_i^+(\mu)$, where the sum is over all positive Lyapunov exponents of $\mu$. Now, by Proposition
\ref{proA1} there is a periodic point $p$ of $f$ such that $\sum \chi_i^+(\mu)< \sum \chi_i^+(\mu_p)+\eps$. And then, we have
$$
h(f) < \sup_{p \in Per(f)} \sum_{i}^{+} \chi_i(p)+ 2\eps.
$$
Therefore, since $\eps$ is arbitrarily  small we prove the theorem.

$\hfill\square$

\section{Entropy estimate for symplectomorphisms}

If $f$ is a $C^1$ diffeomorphism over some manifold $M$, and $p$ is a hyperbolic periodic point of $f$, we denote by $H(p,f)$ the set of
transversal homoclinic points of $p$, where  $q\not \in o(p)$ is a {\it transversal homoclinic point} if is a transversal intersection point of
$W^s(o(p),f)$ and $ W^u(o(p),f)$.  If the intersection is not transversal we say that $q$ is a point of {\it homoclinic tangency}.

Zhihong Xia in \cite{Xia} proved that there exists a residual subset $\mathcal{H}\subset \dw$ such that if $f\in \mathcal{H}$ and $p$ is a
hyperbolic periodic point of $f$ then  transversal homoclinic points are dense on stable and unstable manifolds, $W^s(o(p),f)\cup
W^u(o(p),f)\subset cl(H(p,f))$. Let us now state the main proposition and prove Theorems \ref{symplectic} and \ref{symbolic}. We postpone the
proof of this proposition to the next section. Recall $ \chi(p,f)=1/\tau(p,f) \,\log \lambda(p,f) $ is the smallest positive lyapunov exponent
for a hyperbolic periodic point $p$ of $f$.

\begin{proposition} (Main Technical Proposition)
Let $p$ be a hyperbolic periodic point of some non Anosov diffeomorphim $f\in \mathcal{H}$. Given $n>0$ and any neighborhood $\mathcal{N}\subset
\dw$ of $f$,  there exists an open set $\mathcal{U}\subset \mathcal{N}$ such that if $g\in \mathcal{U}$, then $g$ has a basic hyperbolic set
$\Lambda(p(g),n)\subset cl(H(p(g),g))$, where $p(g)$ is the continuation of the hyperbolic periodic point $p$ of $f$ for $g$, such that the
following properties are true

\begin{itemize}
\item[a) ]
$h(g|\Lambda(p(g),n))>\chi(p(g),g)-\frac{1}{n}.$

\item[b)] There exists an ergodic measure $\mu\in \mathcal{M}(\Lambda(p(g),n))$ such that
$$h_{\mu}(g)> \chi(p(g),g)-\frac{1}{n}.$$

\item[c)] For every ergodic measure  $\mu\in\mathcal{M}(\Lambda(p(g),n))$, we have
$$
\rho(\mu,\mu_{p(g)})<\frac{1}{n},
$$
where $\rho$ is a metric which generates the weak topology.
\item[d)] For every periodic point $q\in \Lambda(p(g),n)$, we have
$$\chi(q,g)>\chi(p(g),g)-\frac{1}{n}.
$$
\end{itemize}

\label{proposição princial}\end{proposition}

\subsection{Proof of Theorem \ref{symplectic}}
We denote by $\mathcal{A}$ the set of Anosov diffeomorphisms and consider $\mathcal{D}=\dw-cl(\mathcal{A})$, the complement of the closure of
Anosov diffeomorphisms.

For positive integers  $n$ and $m$, let $B_{n,m}$ be the set of diffeomorphisms  $f$ in $\mathcal{D}$ such that there are $p\in H_n (f)$ and a
hyperbolic basic set  $\Lambda\subset cl(H(p,f)),$ satisfying  $$h(f|\Lambda)>s_n(f)-\dis\frac{1}{m}.$$ Theorem \ref{symplectic}  follows
immediately from the next claim.

{\it {\bf Claim:} $B_{n,m}$ is an open and dense subset of $\mathcal{D}$, for every positive integers $n$ and $m$. }

To prove the claim, we can start with $f\in\mathcal{D}\cap\mathcal{H}$ since we are in a Baire space. Now, let $n,m\in\N$ be anyone.

By definition of $s_n$, there exists $p_0\in H_n(f)$ such that
$$s_n(f)=\chi(p_0,f).$$

Using Proposition \ref{proposição princial}, we can find $f_1$  $C^1-$close to $f$ such that $f_1$ has a hyperbolic basic set $\Lambda\subset
cl(H(p_0(f_1),f_1))$, where $p_0(f_1)$ is the continuation of $p_0$ for $f_1$, and
\begin{align}
h(f_1|\Lambda)&> \chi(p_0(f_1),f_1)-\frac{1}{3m}.\label{e3}
\end{align}
Now, using the robustness of $\Lambda$ and $p_0$, the invariance of topological entropy  
and that $s_n$ is continuous, we have the following for $g$ $C^1-$near $f_1$
\begin{align*}
h(g)&\geq h(g|\Lambda(g))
\\
&= h(f_1|\Lambda)
\\
&>\chi(p_0(f_1),f_1)-\frac{1}{3m}
\\
&\geq\chi(p_0,f)-\frac{2}{3m}
\\
&= s_n(f)-\frac{2}{3m}
\\
&>s_n(g)-\frac{1}{m},
\end{align*}
which proves the claim and then  Theorem B.

$\hfill\square$

\vs

\subsection{Proof of Theorem \ref{symbolic}.}

 Remember that $(Y,\sigma)$ is a symbolic extension of $(M,f)$ if there exists a continuous surjective map $\pi:Y\rightarrow M$ such that
$\pi\circ \sigma=f\circ \pi$. As we already comment, it may happen that symbolic extensions of a system have larger entropy and carry much more
information than the system.

Hence, let $$ h_{ext}^{\pi}(\mu)=\sup\{h_{\nu}(\sigma|Y):\pi_*\nu=\mu\}, \, \text{ for } \mu\in\mathcal{M}(f),$$ and observe that principal
symbolic extensions minimize these functions.

Let $S(f)$ be the set of all possible symbolic extensions  $(Y,\sigma,\pi)$ of $(M,f)$. We say that $S(f)=\emptyset$ if there is no symbolic
extension of $(M,f)$. We define the {\it residual entropy of the system} by
$$
h_{res}(f)=h_{sex}(f)-h(f),
$$
where
$$
h_{sex}(f)=\left\{\begin{array}{ll} \inf\{h_{ext}^{\pi}(\mu):(Y,\sigma,\pi)\in S(f)\}& \text{ if } S(f)\neq \emptyset
\\
\infty& \text{ if } S(f)=\emptyset
\end{array}\right.
$$

In terms of this, to prove Theorem \ref{symbolic} we need to show that $h_{sex}(f)=\infty$ for all non Anosov diffeomorphism $f$ in some
residual subset $\mathcal{B}\subset \dw$.

Given $f:M\rightarrow M$ a homeomorphism in a compact metric space $M$, an increasing sequence $\alpha_1\leq \alpha_2\leq\ldots$ of partitions of
$M$ is called  {\it essential} for $f$ if
\begin{itemize}
\item [1.] $diam(\alpha_k)\rightarrow 0$ when $k\rightarrow \infty$, and
\item [2.] $\mu(\partial\alpha_k)=0$ for every $\mu\in\mathcal{M}(f)$. Where $\partial\alpha_k$
denotes the union of boun\-da\-ries of all elements of the partition  $\alpha_k$.
\end{itemize}

A {\it sequence of simplicial partitions} is a nested sequence $\mathcal{S}=\{\alpha_1,\alpha_2,\ldots\}$ of partitions whose diameters go to
zero, and each $\alpha_k$ is given by some smooth triangulation of $M$. By Proposition 4.1 in \cite{Newhouse3} there is a residual subset
$\mathcal{R}_{\mathcal{S}}\subset \dw$ such that if $f\in \mathcal{R}_{\mathcal{S}}$ then $\mathcal{S}$ is an essential sequence of partitions
for $f$.

Hence, for every $k$ fixed, the function $$h_k(\mu)=h_{\mu}(\alpha_k),$$ is the infimum of continuous functions  over $\mathcal{M}(f)$, and then
is upper semicontinuous. Here $h_{\mu}(\alpha_k)$ is the entropy of the partition $\alpha_k$ for $f$. The following proposition gives us a very
useful way to prove non existence of symbolic extensions.  It was also proved in \cite{Newhouse3}.

\begin{proposition}
Let $f\in\mathcal{R}_{\mathcal{S}}$ and suppose $\mathcal{E}$ be some compact subset in $\mathcal{M}(f)$ such that there exists a positive real
number $\rho_0$  such that each $\mu\in\mathcal{E}$ and $k>0$,
$$
\limsup_{\nu\in\mathcal{E},\nu\rightarrow \mu}[h_{\nu}(f)-h_k(\nu)]>\rho_0.
$$
Then,
$$
h_{sex}(f)=\infty.
$$
\label{prop prin}\end{proposition}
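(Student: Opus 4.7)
The plan is to invoke the entropy-structure theory of Downarowicz on which the theory of symbolic extensions rests. Let $H=(h_k)_{k\geq 1}$ be the sequence of entropy functionals associated to the essential simplicial sequence $\mathcal{S}$; the hypothesis that $\mathcal{S}$ is essential guarantees that each $h_k$ is upper semicontinuous and that $h_k(\mu)\nearrow h_\mu(f)$ pointwise on $\mathcal{M}(f)$, so $(h_k)$ qualifies as an entropy structure for $f$. By Downarowicz's variational principle, for every symbolic extension $\pi\in S(f)$ the functional $\mu\mapsto h_{ext}^{\pi}(\mu)$ is an \emph{extension entropy function} for $H$, meaning that it is affine, satisfies $h_{ext}^{\pi}(\mu)\geq h_\mu(f)$, and $h_{ext}^{\pi}-h_k$ is upper semicontinuous for every $k$. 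It therefore suffices to prove that every extension entropy function $E$ satisfies $E(\mu)=+\infty$ at some (in fact every) $\mu\in\mathcal{E}$: this forces $h_{sex}(f)=+\infty$ either trivially (if $S(f)=\emptyset$) or because every $\pi\in S(f)$ makes $h_{ext}^{\pi}(\mu)=+\infty$.

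I would prove by induction on $n\geq 0$ the assertion that
$$
E(\mu)\geq h_\mu(f)+n\rho_0\qquad\text{for every }\mu\in\mathcal{E}\text{ and every extension entropy function }E.
$$
The base case $n=0$ is just the defining property $E\geq h$. For the inductive step, fix $\mu\in\mathcal{E}$ and $k\in\N$. Combining upper semicontinuity of $E-h_k$, the inductive hypothesis applied to nearby $\nu\in\mathcal{E}$, and the hypothesised limsup bound, one chains
\begin{align*}
E(\mu)-h_k(\mu) &\geq \limsup_{\nu\to\mu,\;\nu\in\mathcal{E}}\bigl[E(\nu)-h_k(\nu)\bigr]\\
&\geq n\rho_0+\limsup_{\nu\to\mu,\;\nu\in\mathcal{E}}\bigl[h_\nu(f)-h_k(\nu)\bigr]\\
&> (n+1)\rho_0.
\end{align*}
Letting $k\to\infty$ and using $h_k(\mu)\nearrow h_\mu(f)$ gives $E(\mu)\geq h_\mu(f)+(n+1)\rho_0$, closing the induction. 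Sending $n\to\infty$ yields $E(\mu)=+\infty$, which is exactly what is needed.

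The main obstacle is really just quoting the correct form of the symbolic extension variational principle: one must verify that the sequence $(h_k)$ coming from the essential simplicial partitions of $\mathcal{S}$ is indeed an entropy structure in Downarowicz's sense, so that each $h_{ext}^{\pi}$ qualifies as an extension entropy function with $h_{ext}^{\pi}-h_k$ upper semicontinuous. This is the standard framework already employed by Downarowicz and Newhouse in \cite{Newhouse3} to obtain the analogous non-existence result for area-preserving surface diffeomorphisms, so no new abstract machinery has to be developed here; everything beyond that is the clean two-line induction above, which transports the local non-uniformity hypothesis on $\mathcal{E}$ into unbounded growth of any candidate extension entropy function.
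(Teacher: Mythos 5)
Your proof is correct: the induction $E(\mu)\geq h_\mu(f)+n\rho_0$ on extension entropy functions (superenvelopes), driven by upper semicontinuity of $E-h_k$, the pointwise convergence $h_k\nearrow h$, and the limsup hypothesis on $\mathcal{E}$, is exactly the Downarowicz--Newhouse argument, and the paper itself gives no proof of this proposition but simply quotes it from \cite{Newhouse3}. The only ingredient you rightly flag as needing to be cited rather than re-proved is the symbolic extension entropy variational principle together with the fact that an essential sequence of simplicial partitions yields an entropy structure, so that each $h_{ext}^{\pi}$ is indeed an affine superenvelope with $h_{ext}^{\pi}-h_k$ upper semicontinuous.
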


Recall $H_n(f)$ denotes the set of hyperbolic periodic points with period smaller or equal than $n$, and let $H(f)=\cup_n H_n(f)$. By Pugh's
closing lemma the set of diffeomorphisms $\mathcal{R}_1$ formed by $f$ with $H(f)\neq\emptyset$ is open and dense in $\dw$. Hence, it's well
defined $\tau(f)$ as the smallest period of the elements in $H(f)$ for every $f\in\mathcal{R}_1$, and then let $\mathcal{R}_{1,m}\subset
\mathcal{R}_1$ be the set of diffeomorphisms $f$ with $\tau(f)=m$. Note, $\mathcal{R}_1$ is a disjoint union of $\mathcal{R}_{1,m}$.

Now, for each $f\in \mathcal{R}_1$ we define
$$
\chi(f)=\sup\{\chi(p,f):\, p\in H(f)\text{ and } \tau(p,f)=\tau(f)\}.
$$
Then, $\chi(f)>0$ and depends continuously on $f\in\mathcal{R}_1$.

Recalling that $\mathcal{A}\subset \dw$ is the set of Anosov diffeormophisms, let $\mathcal{R}_{2,m}=\mathcal{R}_{1,m} \backslash
cl(\mathcal{A}), $ which implies $ \mathcal{R}_1\backslash cl(\mathcal{A})=\bigcup_{m} \mathcal{R}_{2,m}. $

Suppose now that $\Lambda$ is an f-invariant periodic set  with basis  $\Lambda_1$ and $\alpha={A_1,A_2,...,A_s}$ some finite partition of $M$.
We say that  $\Lambda$ is {\it subordinate} to $\alpha$ if for each  positive integer $n$, there exists an element $A_{i_n}\in\alpha$ such that
$f^{n}(\Lambda_1)\subset A_{i_n}$. Hence, if $\mu\in \mathcal{M}(f|\Lambda)$ then $h_{\mu}(\alpha)=0$.

Now, given a positive integer $n$,  we say that a diffeomorphism $f$ satisfies  property  $\mathcal{S}_n$ if for every $p\in H_n(f)$ with
$\chi(p,f)>\dis\frac{\chi(f)}{2}$, {\it
\begin{itemize}
\item [1.] There exists a hyperbolic basic set of zero dimension $\Lambda(p,n)$ for $f$ such that
\begin{equation*}
\Lambda(p,n)\cap\partial\alpha_n=\emptyset \; \text{ and } \; \Lambda(p,n)\, \text{ is subordinate to } \,\alpha_n. \label{cond
1}\end{equation*}

\item[3.] There exists an ergodic measure $\mu\in \mathcal{M}(\Lambda(p,n))$ such that
\begin{equation*}
h_{\mu}(f)> \chi(p,f)-\frac{1}{n}. \label{cond 3}\end{equation*}

\item[4.] For every ergodic measure  $\mu\in\mathcal{M}(\Lambda(p,n))$, we have
\begin{equation*}
\rho(\mu,\mu_p)<\frac{1}{n}. \label{cond 4}\end{equation*}

\item[5.] For every periodic point $q\in \Lambda(p,n)$, we have
\begin{equation*}
\chi(q,f)>\chi(p,f)-\frac{1}{n}. \label{cond 5}\end{equation*}
\end{itemize}}

Given positive integers  $m\leq n$, let $\mathcal{D}_{m,n}\subset \mathcal{R}_{2,m}$ be the subset of  diffeomorphisms  $f$ satisfying property
$\mathcal{S}_n$.

Since periodic points in $H_n(f)$ with smallest positive lyapunov exponent bigger than $\chi(f)/2$ are finite, directly  from Proposition
\ref{proposição princial}, conditions (3), (4) and (5) above are satisfied for diffeomorphisms in an open and dense subset of
$\mathcal{R}_{2,m}$. Now, fixed some partition $\alpha_n$ we can take a smaller open set $U$  where we build the hyperbolic set $\Lambda(p,n)$,
as we can see in the proof of Proposition \ref{proposição princial} in the next section, in order to obtain that the set
$\Lambda(p,n)$ 
is subordinate to $\alpha_n$. Therefore, since this is a robust property we have proved the following lemma.

\begin{lemma}
For positive integers $m\leq n$, $\mathcal{D}_{m,n}$ is open and dense in $\mathcal{R}_{2,m}$. \label{lema principal}\end{lemma}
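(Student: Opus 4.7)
The plan is to establish density and openness of $\mathcal{D}_{m,n}$ inside $\mathcal{R}_{2,m}$ separately. For density, fix $f\in\mathcal{R}_{2,m}$ and observe that the set
$$
H^*_n(f):=\{p\in H_n(f):\chi(p,f)>\chi(f)/2\}
$$
is finite, since $H_n(f)$ already consists of finitely many isolated hyperbolic orbits on the compact manifold. By a preliminary $C^1$-small perturbation I may further assume that $f$ lies in Xia's residual set $\mathcal{H}$ (so Proposition \ref{proposi��o princial} applies) and, in addition, that no point of the orbit of any $p\in H^*_n(f)$ meets the boundary $\partial\alpha_n$ of the fixed simplicial partition; this last requirement is a codimension-one condition and hence generic.

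Next, enumerate $H^*_n(f)=\{p_1,\dots,p_\ell\}$ and apply Proposition \ref{proposi��o princial} successively to each $p_i$. Since the orbits $o(p_i)$ are pairwise disjoint, the perturbations can be supported in disjoint small neighborhoods, so at step $i$ I obtain an open set of perturbations carrying a basic hyperbolic set $\Lambda(p_i(g),n)\subset cl(H(p_i(g),g))$ that already satisfies conditions (3), (4) and (5) of property $\mathcal{S}_n$. The additional point is condition (1). Here I appeal to the fact, to be verified in the proof of Proposition \ref{proposi��o princial} in the next section, that $\Lambda(p,n)$ is constructed as a horseshoe born from the unfolding of a homoclinic tangency associated with the orbit of $p$, and can therefore be trapped inside an arbitrarily small preassigned neighborhood $U_i$ of $o(p_i)$. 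Shrinking $U_i$ so that its $j$-th iterate lies inside the interior of the unique simplex $A_{i_j}\in\alpha_n$ containing $f^j(p_i)$ yields simultaneously $\Lambda(p_i,n)\cap\partial\alpha_n=\emptyset$ and subordination to $\alpha_n$; zero-dimensionality is automatic from the horseshoe structure.

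For openness, all five conditions in $\mathcal{S}_n$ are $C^1$-robust. Each $\Lambda(p_i(g),n)$ admits a hyperbolic continuation whose periodic points, Lyapunov exponents and topological entropy $h(g|\Lambda(p_i(g),n))$ vary continuously in $g$, so the strict inequalities in (3), (4) and (5) persist. The finite family $H^*_n(g)$ and the identification of the simplices containing each $g^j(p_i(g))$ are stable under $C^1$-small perturbations, and the compact continuation of $\Lambda(p_i(g),n)$ remains disjoint from the fixed closed set $\partial\alpha_n$ and inside the prescribed simplices, preserving (1).

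The main obstacle, as the authors point out, is matching the abstract output of Proposition \ref{proposi��o princial} with the fixed triangulation $\alpha_n$: neither subordination nor disjointness from $\partial\alpha_n$ is asserted in that proposition. The whole argument therefore hinges on inspecting the horseshoe construction performed in the next section and checking that it may be confined to an arbitrarily small preassigned neighborhood of $o(p)$. Once this localization is granted, the perturbation scheme above furnishes both density and openness.
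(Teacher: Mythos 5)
Your proposal matches the paper's argument: finiteness of the set of periodic points in $H_n(f)$ with $\chi(p,f)>\chi(f)/2$ plus the Main Technical Proposition gives conditions (3)--(5) on an open and dense subset, subordination to $\alpha_n$ is obtained by localizing the horseshoe construction relative to the fixed partition, and openness follows from the $C^1$-robustness of all the conditions. One caveat on your justification of condition (1): $\Lambda(p_i,n)$ cannot be trapped in an arbitrarily small neighborhood of $o(p_i)$, since it necessarily contains points near the homoclinic tangency point $q$ and its finitely many transition iterates, which may be far from the periodic orbit; what the construction actually provides, and what subordination actually requires, is that each of the $t$ iterates of the basis $\tilde{\Lambda}(p_i,N)$ is a set of small diameter located near a point of the (finite relevant segment of the) homoclinic orbit, so that after a generic adjustment each such iterate lies in the interior of a single simplex of $\alpha_n$.
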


Now, using property $S_n$ and the above lemma the proof of Theorem \ref{symbolic} is similar to the proof of Theorem 1.3 in \cite{Newhouse3},
but for convenience we reproduce it again.

{\it Proof of Theorem \ref{symbolic}:} Let
$$
\mathcal{R}_2= \bigcup_{m\geq 1}\bigcap_{n\geq m} D_{m,n}.
$$
By Lemma \ref{lema principal}, we have that $\mathcal{R}=\mathcal{R}_{\mathcal{S}}\cap(\mathcal{R}_2\cup \mathcal{A})$ is a residual set in
$\dw$.

What we show now is that all non Anosov diffeomorphism $f\in \mathcal{R}$ has no symbolic extension, i.e., $h_{sex}(f)=\infty$.

Let $f\in \mathcal{R}$ be a non Anosov diffeomorphism. Now, we define
$$
\mathcal{E}_1=\left\{\mu_p; \,\text{ such that }\, p\in H(f) \,\text{ and }\, \chi(p,f)>\frac{\chi(f)}{2}\right\},
$$
and let $\mathcal{E}$ denotes its closure in $\mathcal{M}(f)$.

Using this set and  property $\mathcal{S}_n$, we show that the hypothesis of Proposition  \ref{prop prin} are satisfied if we take
$\rho_0=\dis\frac{\chi(f)}{2}$. For this, it's enough to verify the hypothesis for every $\mu_p\in \mathcal{E}_1$, and fixed $k\in\N$.

Hence, given $n\in\N$ big enough, since $f\in \mathcal{R}$, there exists a hyperbolic basic set $\Lambda(p,n)$ for $f$ subordinate to
$\alpha_n$, and an ergodic measure $\nu_n\in\mathcal{M}(\Lambda(p,n))$ such that $\rho(\nu_n,\mu_p)<1/n$ and
\begin{equation}
h_{\nu_n}(f)>\chi(p,f)-\frac{1}{n}. \label{d1}\end{equation}

Since we can suppose  $n>k$, we have that $\alpha_n$ is smaller than $\alpha_k$ and then  $\Lambda(p,n)$ is also subordinate to $\alpha_k$.
Hence, as $\nu_n\in \mathcal{M}(\Lambda(p,n))$
\begin{equation}
h_k(\nu_n)=0. \label{d2}\end{equation}

Therefore, we have that  $\nu_n\rightarrow \mu_p$, when $n\rightarrow\infty$, and
$$
|h_{\nu_n}(f)-h_k(\nu_n)|=h_{\nu_n}(f)>\chi(p,f)-\frac{1}{n}>\rho_0,
$$
where the last inequality is satisfied for big values of $n$, since $\mu_p\in \mathcal{E}_1$.

To complete the proof we need to show that $\nu_n$ is in  $\mathcal{E}$, for every $n$. To see this, we use that $\nu_n$ is approximated
by periodic measures since is an ergodic measure supported in a hyperbolic basic set. 
That is, there exist  $q_{m,n}\in\Lambda(p,n)$, hyperbolic periodic points of $f$, such that $\mu_{q_{m,n}}$ converges to $\nu_n$ in the weak
topology. This way, our work is reduced to show that $\mu_{q_{m,n}}\in\mathcal{E}_1$, which is direct from item 5 of property $\mathcal{S}_{n}$,
provided $f\in \mathcal{R}$. The proof of theorem \ref{symbolic} is complete.

$\hfill\square$

\section{ Symplectic Perturbations: proof of Proposition \ref{proposição princial} }
Before going into the proof of the proposition let us recall some basic fact about symplectic structure. Let $(V, \omega)$ be a symplectic
vector space of dimension $2n$. For any subspace $W \subset V$ its {\it symplectic orthogonal} is defined as
$$
  W^{\omega} = \{  v \in V; \omega(v, w) = 0 \quad \text{for all} \quad w \in W \}.
$$
The subspace $W$ is called {\it symplectic} if $W^{\omega} \cap W = \{0\}.$  $W$ is called {\it isotropic} if $W \subset W^{\omega},$ that is
$\omega|W\times W =0.$ A special case of isotropic subspace is a {\it Lagrangian} subspace, i.e., when $W = W^{\omega}.$ For a symplectic
manifold $(M, \omega)$ and a symplectic diffeomorphism $f$ it is easy to see that for any point on an unstable (stable) manifold of a hyperbolic
periodic point, the tangent space to  unstable (stable) manifold is a Lagrangian subspace.

The proof of main proposition is done in three steps where the second and third are the main ones and use the symplectic structures.

Let $f\in\mathcal{H}$ be a non Anosov diffeomorphisms.

\begin{itemize}
\item [Step 1-] We find $g_1$ $C^1-$close to $f$ such that $p$ is still a hyperbolic periodic point of $g_1$,
and $g_1$ exhibits one homoclinic tangency between $W^s(o(p), g_1)$ and $W^u(o(p), g_1)$. Moreover, $g_1=Df_p$ in a small neighborhood of the
orbit of $p$ (in local symplectic coordinates).

\item [Step 2-] We find $g_2$ $C^1-$ close to $g_1$ where $g_2$ admits a segment of line of homoclinic tangency. We should perform perturbations in the symplectic high dimensional setting.

\item [Step 3-] Finally, we  perturb $g_2$ to obtain $g$ with a hyperbolic invariant set satisfying  the properties required by the proposition.
All $C^1$-perturbations of $g$ also share the same property for the corresponding hyperbolic set.
\end{itemize}

{\it Proof of step 1:} The way, to create homoclinic tangency is in the lines of the proof of Newhouse (step 6, Theorem 1.1 in
\cite{Newhouse2}). After that, we use a pasting lemma of Arbieto-Matheus \cite{pasting lema} and continuity of compact parts of stable and
unstable manifolds to obtain a tangency and linearization in a neighbourhood of the periodic point. We should point out that because of high
dimensions of stable and unstable manifolds, by homoclinic tangency we obtain at least one (it can be unique) commom direction between the
tangent spaces of these manifolds at the point of tangency.

\vsm
 {\it Proof of step 2:}
For simplicity we suppose $p$ is a hyperbolic fixed point of $g_1$, 
and let $V$ be a neighborhood of $p$ where in local symplectic coordinates $g_1$ is linear, with $E_p^s=\R^{n}\times \{0\}^n$ and
$E_p^u=\{0\}^n\times\R^{n}$.
 Moreover, by Darboux's Theorem, we can also suppose in $V$ that
$\omega$ is the standard 2-form for $\R^{2n}$, $\omega=\sum dx_i\wedge dy_i$.

Let $q$ be the point of homoclinic tangency between $W^s_{loc}(p,g_1)$ and $W^u(p,g_1)$, such that $q\in V$ and $g_1^{-1}(q)\not\in V$. Hence,
we can take one small neighborhood $U\subset V$ of $q$ such that $g_1^{-1}(U)\cap V=\varnothing$. We denote by $D$ the connected component of
$W^u(p,g_1)\cap U$ that contains $q$.

We want now to perturb $g_1$ in order to get an interval of homoclinic tangency. Since stable (unstable) manifolds is a graphic, it's not
difficult to do this in the conservative scenario using the point of tangency $q$. In symplectic case this may be done using the fact that
stable (unstable) manifold is a lagrangian manifold as we explain it below.

First, we consider another simplectic coordinate on $U$ in order to simplify the notation such that $q$ is the origem, and we have the following
$$
W^s_{loc}(p,g_1)\cap U=\{y_1=y_2=...=y_n=0\}\cap U,
$$
$$
T_{q}D=\{y_{1}=x_2=...=x_n=0\},
$$
and so
$$
W^s_{loc}(p,g_1)\cap U\cap T_{q} D=\{e_1\},
$$
where we are considering $\{e_1,...,e_n,....,e_{2n}\}$ as the canonical basis of $\R^{2n}$. Note we are using that
$dim(T_qW^s(p,g_1)+T_qW^u(p,g_1))=2n-1$, which we can suppose after some perturbation if necessary.

The following lemma is the technical point that allows us to build the interval of homoclinic tangency for symplectomorphisms.

\begin{lemma}
There exists a symplectic diffeomorphism $\phi:U\rightarrow \R^{2n}$ over its image, $C^1$ close to identity map $Id$ in a small neighborhood of
$q$, such that $\phi(D)\cap W^s_{loc}(p,g_1)\cap U$ contains one segment of line. \label{curva}\end{lemma}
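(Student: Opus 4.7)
The plan is to construct $\phi$ as the inverse of an explicit symplectic map $\psi$, $C^1$-close to the identity near $q$, that sends a straight line $L_0 := \{(t,0,\ldots,0) : |t|<\delta\} \subset W^s_{loc}(p,g_1)$ onto a curve contained in $D$. The Lagrangian character of $D$ will be the indispensable ingredient that forces $\psi$ to be symplectic automatically.

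First I would exploit that $T_q D = \mathrm{span}(\partial_{x_1}, \partial_{y_2},\ldots,\partial_{y_n})$ projects isomorphically onto the $n$-plane with coordinates $(x_1,y_2,\ldots,y_n)$, so that locally near $q=0$ the submanifold $D$ is a graph
\[
D = \{\,(u,\, \alpha_2(u,v),\,\ldots,\, \alpha_n(u,v),\, \beta_1(u,v),\, v_2,\ldots,v_n) : (u,v)\in V_0\,\},
\]
with $\alpha_i$ and $\beta_1$ vanishing to order at least $2$ at $0$. Rewriting the Lagrangian condition $\omega|_D = 0$ in these graph coordinates yields the compatibility system $\partial_{v_j}\alpha_i = \partial_{v_i}\alpha_j$ (for $i,j\geq 2$) together with $\partial_{v_j}\beta_1 = -\partial_u \alpha_j$ (for $j\geq 2$).

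I would then define $\psi : V \to \R^{2n}$ by keeping $x_1$ and the $y_j$ ($j \geq 2$) fixed and shifting the remaining coordinates by $\alpha_i$ and $\beta_1$:
\[
\psi(x,y) \;=\; \bigl(x_1,\; x_2+\alpha_2,\;\ldots,\; x_n+\alpha_n,\; y_1+\beta_1,\; y_2,\;\ldots,\; y_n\bigr),
\]
where each $\alpha_i$ and $\beta_1$ is evaluated at $(x_1,y_2,\ldots,y_n)$. A direct computation of $\psi^*\omega - \omega$ produces cross terms $(\partial_{v_j}\beta_1 + \partial_u\alpha_j)\,dx_1\wedge dy_j$ and $\partial_{v_k}\alpha_i\,dy_k\wedge dy_i$ whose vanishing is precisely the compatibility system derived above; hence $\psi$ is symplectic \emph{because $D$ is Lagrangian}. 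Since the $\alpha_i$ and $\beta_1$ vanish to order at least two, $\psi$ is automatically $C^1$-close to the identity on a sufficiently small neighborhood of $q$, and by construction $\psi$ sends the segment $L_0$ to the curve $\gamma(t) = (t, \alpha_2(t,0),\ldots,\alpha_n(t,0),\beta_1(t,0),0,\ldots,0) \subset D$. Setting $\phi := \psi^{-1}$, the straight segment $L_0$ lies inside $\phi(D)\cap W^s_{loc}(p,g_1)\cap U$, which is exactly the conclusion of the lemma.

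The step I expect to be the main obstacle is verifying that the graph-straightening map $\psi$ is actually symplectic: for a generic $n$-dimensional graph this would fail, and it is only the Lagrangian character of $D$ that makes the compatibility system match the symplecticity PDE term by term. A secondary technical point is to make $\phi$ coincide with the identity outside $U$; this is handled by the standard cutoff trick, writing $\psi$ as the time-one map of a Hamiltonian $H$ vanishing at $q$, multiplying $H$ by a bump function equal to $1$ on a small neighborhood of $\gamma$ and supported in $U$, and taking the time-one flow of the resulting Hamiltonian, which produces a global symplectomorphism that agrees with $\psi$ on the line segment and with the identity outside $U$.
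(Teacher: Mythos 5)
Your proposal is correct and is essentially the paper's own argument: both write $D$ as a graph over the Lagrangian plane $T_qD$ (spanned by the $x_1,y_2,\dots,y_n$ directions) and take the shear along the complementary Lagrangian plane determined by the graph function, with the Lagrangian condition on $D$ being exactly what makes this shear symplectic. The only differences are presentational — you build the inverse map $\psi$ and verify symplecticity by a term-by-term coordinate computation, while the paper defines $\phi(x,y)=(x,y-r(x))$ directly and checks $\phi^*\omega=\omega$ via the pull-back identity using $j^*\omega=i^*\omega=0$ — and your final cutoff remark corresponds to the paper's subsequent use of the Arbieto--Matheus pasting lemma.
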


\proof

Just here we use coordinates $(x,y)$ with respect to the following decomposition of the space $\R^{2n}=E\oplus F$, where $E$ and $F$ are
generated by $\{e_1,e_{n+2},...,e_{2n}\}$ and $\{e_2,...,e_{n+1}\}$, respectively. Recall that $E=T_q D$, and $q=(0,0)$ by the choice of the
coordinate.

Now, since $D$ is locally a graphic of one function with the same class of differentiability that $g_1$, there exists a $C^1$ map
$j:B\subset\R^{n}\rightarrow \R^{2n}$, $j(x)=(x,r(x))$, such that $j(B)=D$. Moreover, $j$ is such that $Dr(0)=0$, and since $D\subset
W^u(p,g_1)$ is a lagragian submanifold, we have $j^*\omega=0$,  where $j^*\omega$ is the pull-back of the form $\omega$ by $j$. Analogously, if
$i:\R^{n}\rightarrow \R^{2n}$ is the natural inclusion, $i(x)=(x,0)$,  we have $i^*\omega=0$ (recall $\omega$ in $U$ is the standard 2-form on
$\R^{2n}$).

Let us define $\phi:U\rightarrow \R^{2n}$ by $\phi(x,y)=(x,y-r(x))$. Taking $U$ smaller, if necessary, $\phi$ is in fact a diffeomorphism from
$U$ into its image and $C^1$ near $Id$, since $Dr(0)=0$. Hence, to conclude the lemma we need to show that $\phi$ is indeed symplectic. Denoting
the projection in the first coordinate by $\pi:\R^{2n}\rightarrow \R^{n}$, $\pi(x,y)=x$, we can rewrite $\phi$ in the following way $
\phi=Id+i\circ\pi-j\circ\pi. $ Then,
$$\phi^*\omega=\omega+\pi^*i^*\omega-\pi^*j^*\omega=\omega,$$ where we use that $i^*\omega=j^*\omega=0$ in the second equality.
Therefore, the lemma is proved.
$\hfill\square$

\vsm

Using the pasting lemma of Arbieto-Matheus \cite{pasting lema} in symplectic case and the map $\phi$ given by Lemma \ref{curva} we can find
$R:U\rightarrow U$ $C^1$ close to identity $Id$, with $R=\phi$ in a small neighborhood of $q$, and $R=Id$ outside another small neighborhood
containing the last one. Hence, considering $\tilde{R}:M\rightarrow M$ with $\tilde{R}=Id$ in $U^{c}$ and $\tilde{R}=R$ in $U$, and taking
$g_2=\tilde{R}\circ g_1$  we have a $C^1$ perturbation of $g_1$ that coincides with $g_1$ in $(g_1^{-1}(U))^{c}$. Moreover, the most important
is that this perturbation exhibits an interval of homoclinic tangency as we wanted. More precisely, there is one segment of line $I\subset
W^s_{loc}(p,g_2)\cap W^u(p,g_2)\cap U$. Note that $I$ is in the space generated by the unit vector $e_1$, and after some symplectic coordinate
changed inside $U$, we can suppose $I\subset\{(x_1,0,...,0), \, -2a\leq x_1\leq 2a\}$, for some $a>0$ small enough and usual coordinates of
$\R^{2n}$.

\vsm {\it Proof of Step 3:} The idea now is to use this interval of tangency to create hyperbolic sets with the properties required by the
proposition. Let $N$ be a big positive integer and $\delta>0$ an arbitrary small real number. As before (using pasting lemma) we can find a
symplectic diffeomorphism $\Theta:M\rightarrow M$, $\delta-C^1$ near $Id$, $\Theta=Id$ in $U^c$ and
$$
\Theta(x,y)=\left(x_1,...,x_n,\,y_1+A\cos\frac{\pi x_1 N}{2a},\, y_2,...,y_n\right), \text{ for } (x,y)\in B(0,r)\subset U,
$$
for $A=\dis\frac{2Ka\delta }{\pi N}$ and $r>0$ small enough, where $K$ is some constant depending only on the symplectic coordinate on $U$.
Hence,  $g=\Theta\circ g_2$ is $\delta-C^1$ close to $g_2$ and moreover $g=g_2$ in the complement of $g_2^{-1}(U)$.  Although the diffeomorphism
$g$ depends on $N$, we always denote these diffeomorphisms by $g$. We would like to note this perturbation is an adaptation of Newhouse's snake
perturbation for higher dimensions, i.e., it destroys the interval of tangency and creates $N$ transversal homoclinic points for $p$ inside $U$.

Using the function $\Theta$ we choose two good points on unstable manifold of $p$ for $g$, $z_1= \Theta(-a,0, \cdots, 0)$ and $z_2= \Theta(a,0,
\cdots, 0)$. Now, we consider  $\gamma_1$ and $\gamma_2$ two transversal disks to unstable manifold $W^u(p,g)$ at $z_1$ and $z_2$, respectively.

From now on we use the symplectic coordinate on $V$ fixed before. Note that $g$ is equal to $g_1$ inside $V$ and so $g$ is linear in $V$.

Given a set  $E$, we denote by $C(E,x)$ the connected component of $E$ containing $x$. By $\lambda-$Lema  and choice of $\gamma_1$ and
$\gamma_2$, $C(g^{-j}(\gamma_1)\cap V,g^{-j}(z_1))$ and $C(g^{-j}(\gamma_2)\cap V,g^{-j}(z_2))$ accumulate on  $W^s_{loc}(p,g)$ for big values
of $j>0$.

Hence, if $D^s=W^s_{loc}(p,g)\cap U$ then for $j$ big enough we can define the rectangle $D_j=D^s \times D^u_j $ as being the cartesian
product between $D^s$ 
and $D^u_j$, where $D^u_j$ is the smallest possible disk in $\{(0,\ldots,0,y_1,\ldots,y_n),\, y_i\in\R\}$ such that
$\pi_2(C(g^{-j}(\gamma_i)\cap V,g^{-j}(z_i)))\subset D^u_j$, for $i=1,2$. Here $\pi_2(x,y)=y$ stands for the projection on the second
$n$th-coordinates of $\R^{2n}$, and recall we are considering $V$ inside Euclidean space with $E_p^s=\R^n\times \{0\}^n$ and $
E_p^u=\{0\}^n\times \R^n$.

Let $J\subset U$ be some small enough disk inside the unstable manifold $W^{u}(p,g)$ containing the $N$ transversal homoclinic points built
before, and let $T>>0$  such that $g^{-T}(J)\subset V$, and moreover $g^{-T}(\gamma_i)$, $i=1,2$, is close to $W^s_{loc}(p,g)$. We denote by
$\tilde{\Gamma}$ the $A/2$-neighborhood of $J$, and define $\Gamma=g^{-T}(\tilde{\Gamma})$.

Now, let $t_0$ be the smallest positive integer such that $C(g^{-t_0}(\gamma_i),g^{-t_0}(z_i))$  is  $A/2-C^1$ close to $W^s_{loc}(p,g)$,
$i=1,2$. Note that if  $t'\geq t_0$, and $g^{t'-T}(D_{t'})\subset \Gamma$, then $g^{t'}(D_{t'})\cap(D_{t'})$ contains $N$ disjoint connected
components. Hence, we consider  $z_3= (b,0,...,0)$ and $z_4=(b',0,...,0)$ two points on local stable manifold of $p$, where $b$ and $b'$ are the
left and right boundary points in the first coordinate of $W^s_{loc}(p,g)\cap U$. Also, let $\gamma_3$ and $\gamma_4$ be two transversal disks
to $W^s_{loc}(p,g)$ at $z_3$ and $z_4$, respectively. By $\lambda-$lemma again we can define $t_1$ as the smallest possible positive integer
such that
$$
C(g^{t_1}(\gamma_i),g^{t_1}(z_i))\cap C(g^{-T}(\gamma_j),g^{-T}(z_j))\cap \Gamma\neq \emptyset, \text{ for } j=1,2 \text{ and } i=3,4.
$$

Finally, we define $t=\max\{t_0,\, t_1+T\}$ , see figure 1. Note $t$ depends on $N$ since $t_0$ and $t_1$ depends, and also observe that $t$
goes to infinity when $N$ goes.

\begin{figure}[httb]
\vsm\centering
\includegraphics[scale=1.5]{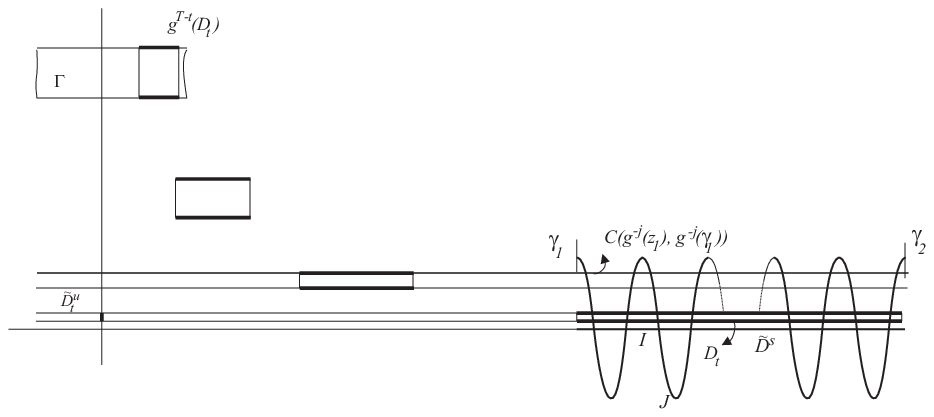}
\caption{}\vsm\label{figura5}\end{figure}

By the comments above and choice of $t$, we have that $g^{t}(D_{t})\cap D_{t}$ has $N$ disjoint connected components, and $t$ is the smallest
possible one such that $D_t$ is $A/2-C^1$ close to $W^s_{loc}(p,g)$ and $g^t(D_t)$ is $A/2-C^1$ close to $J\subset W^u(p,g)$. Therefore, since
we have a horseshoe with N legs, the maximal invariant set in $D_t$ for $g^t$
$$
\tilde{\Lambda}(p,N)=\bigcap_{j\in\Z} g^{tj}(D_t)
$$
is a hyperbolic set with dynamics conjugated to a shift of $N$ symbols. Then  $h(g^{t}|\tilde{\Lambda}(p,N))=\log N$, and taking
$$\Lambda(p,N)=\bigcup_{j=1}^{t} g^j(\tilde{\Lambda}(p,N))$$ we have $h(g|\Lambda(p,N))=\dis\frac{1}{t}\log N$.

The following lemma is the main point in this step.

\begin{lemma}
For $A$ and $t$ defined as before, there exists a positive integer $K_1$ independent of $A$, such that $$ A<K_1 \min \{ \|Dg_p^{-t}|E^u\|,\,
\|Dg_p^{t}|E^s\|\}.$$ \label{afirma}\end{lemma}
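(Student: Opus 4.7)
The plan is to derive the bound by a direct computation in the linearizing chart on $V$, where by Step~1 the map $g$ coincides with $Dg_p$. Write $g(x,y)=(A_sx,A_uy)$ with respect to $\R^{2n}=E^s\oplus E^u$, so that $\|Dg_p^{-t}|E^u\|=\|A_u^{-t}\|$ and $\|Dg_p^{t}|E^s\|=\|A_s^t\|$. The symplectic structure is essential here: since $E^s$ and $E^u$ are Lagrangian and $g$ is symplectic, the eigenvalues of $A_u$ are exactly the reciprocals of those of $A_s$ and their Jordan blocks have matched sizes, hence $\|A_s^j\|$ and $\|A_u^{-j}\|$ coincide up to a multiplicative constant $\kappa$ independent of $j$.

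Next I would estimate $t_0$. Writing $\gamma_i$ as a graph $(s,\phi_i(s))$ over $E^s$ near $z_i$ for $i=1,2$, under $g^{-j}$ it becomes the graph $(A_s^{-j}s,A_u^{-j}\phi_i(s))$; after reparametrizing by $\tilde s=A_s^{-j}s$ one checks that the $C^1$-distance of $g^{-j}(\gamma_i)\cap V$ to $W^s_{loc}=\{y=0\}$ is bounded above by $C\|A_u^{-j}\|$, where $C$ depends only on $z_i$ and $\|\phi_i\|_{C^1}$. Hence $t_0$ is the smallest integer with $C\|A_u^{-t_0}\|\le A/2$, and combining this with the minimality via $\|A_u^{-t_0}\|\ge\|A_u^{-(t_0-1)}\|/\|A_u\|$ sandwiches
\[
\frac{A}{2C\|A_u\|}\le\|Dg_p^{-t_0}|E^u\|\le\frac{A}{2C}.
\]
An analogous argument on the stable side applied to $\gamma_3,\gamma_4$, whose forward iterates accumulate on $W^u$ and intersect $g^{-T}(\gamma_{1,2})$ inside the ``$A$-tube'' $\Gamma=g^{-T}(\tilde\Gamma)$, yields the parallel sandwich $A/(2C'\|A_s\|)\le\|Dg_p^{t_1}|E^s\|\le A/(2C')$.

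To assemble the lemma, observe that the symplectic rate matching $\|A_s^j\|\asymp\kappa^{\pm 1}\|A_u^{-j}\|$ together with the above sandwiches forces $|t_0-t_1|\le C_0$ for a constant $C_0$ depending only on $Dg_p$ and the initial data. Since $T$ is fixed once and for all, $t=\max\{t_0,t_1+T\}$ therefore satisfies $t-t_0\le T+C_0$ and $t-t_1\le T+C_0$, whence
\[
\|Dg_p^{-t}|E^u\|=\|A_u^{-t}\|\ge\frac{\|A_u^{-t_0}\|}{\|A_u\|^{t-t_0}}\ge\frac{A}{2C\|A_u\|^{1+T+C_0}},
\]
and an identical estimate holds for $\|Dg_p^{t}|E^s\|$. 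Taking $K_1$ to be any positive integer above the larger of the resulting two constants gives the required bound.

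The main obstacle will be the $C^1$-closeness bookkeeping at the $t_1$ step, where $\Gamma$ is the preimage under the fixed iterate $g^T$ of an $A/2$-tube around $J\subset W^u$ and one must translate this $A$-thickness into an effective target thickness seen by the forward iterates of $\gamma_{3,4}$. Because $T$ is independent of $N$, this only affects multiplicative constants rather than the scaling in $A$, and the Lagrangian pairing of $E^s$ with $E^u$ is what ensures $|t_0-t_1|$ stays bounded as $A\to 0$ (rather than diverging like $\log(1/A)$), so that the $\max$ in the definition of $t$ contributes only an $O(T)$ offset.
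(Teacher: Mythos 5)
Your overall strategy is the same as the paper's: work in the linearizing chart on $V$, convert ``$C^1$-distance of the iterated disks to $W^s_{loc}$ (resp.\ $W^u_{loc}$) after $m$ steps inside $V$'' into $\|Dg_p^{-m}|E^u\|$ (resp.\ $\|Dg_p^{m}|E^s\|$), use minimality of $t_0,t_1$ to get that one step earlier the distance still exceeded $A/2$, and absorb the fixed offset $T$ into the constant. The lower bounds $\|A_u^{-t_0}\|\ge A/(2C\|A_u\|)$ and $\|A_s^{t_1}\|\ge A/(2C'\|A_s\|)$ obtained this way are correct and are exactly the paper's inequalities $A/2\le K_3\|Dg_p^{-t+T+1}|E^u\|$ and $A/2\le K_5\|Dg_p^{t-1-T}|E^s\|$. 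Making the Lagrangian duality $\|A_u^{-j}\|\asymp\kappa^{\pm1}\|A_s^{j}\|$ explicit is also a genuine improvement: the paper needs it to pass from one of the two inequalities to the $\min$ of both, but never states it in the proof.

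There is, however, a gap in your assembly step. The upper halves of your sandwiches, $\|A_u^{-t_0}\|\le A/(2C)$ and $\|A_s^{t_1}\|\le A/(2C')$, do not follow from your graph computation, and hence neither does $|t_0-t_1|\le C_0$. The condition $d_{t_0}\le A/2$ only yields an inequality of the form $K_2\,\|A_u^{t_0}\|^{-1}\le A/2$ (this is the left-hand side of the paper's inequality (\ref{desigualdade 1})): the distance of the iterated disk to $W^s_{loc}$ is bounded \emph{below} by the co-norm quantity $\|A_u^{j}\|^{-1}$, not by $\|A_u^{-j}\|$. When $Dg_p|E^u$ is not conformal (the generic situation for $n\ge 2$, which is the whole point of the symplectic generalization), $\|A_u^{j}\|^{-1}$ can be exponentially smaller than $\|A_u^{-j}\|$; e.g.\ for $A_u=\mathrm{diag}(\lambda_1,\lambda_2)$ with $1<\lambda_1<\lambda_2$ and entry vector aligned with the $\lambda_2$-direction, one gets $t_0\approx\log(1/A)/\log\lambda_2$ and $\|A_u^{-t_0}\|\approx A^{\log\lambda_1/\log\lambda_2}\gg A$. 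So $|t_0-t_1|$ may diverge as $A\to0$ and the chain $\|A_u^{-t}\|\ge\|A_u^{-t_0}\|/\|A_u\|^{t-t_0}$ with $t-t_0\le T+C_0$ breaks down. The repair needs no new input: drop the $|t_0-t_1|$ claim and split into cases according to which term realizes $t=\max\{t_0,t_1+T\}$. If $t=t_0$, your unstable lower bound gives $\|A_u^{-t}\|\gtrsim A$ directly; if $t=t_1+T$, your stable lower bound gives $\|A_s^{t}\|\gtrsim A$ after paying the fixed factor $\|A_s^{-T}\|$. In either case the duality you already proved transfers the bound to the other term of the $\min$. This case dichotomy is precisely how the paper argues (``either there exists $z\in D_t$ with $d(g(z),W^s_{loc})\ge A/2$, or there exists $z\in g^t(D_t)$ with $d(g^{-1}(z),J)\ge A/2$'').
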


\proof Since $V$ is a neighborhood of $p$ where $g$ is linear, if $m$ is the biggest one such that $g^j(x)\in V$ for $0\leq j\leq m$, there
exist constants $K_2$ and $K_3$ depending on the symplectic coordinate on $V$  such that

\begin{equation}
K_2 \|Dg_p^{m}|E^u\|^{-1}\leq d(x,W^s_{loc}(p,g))\leq K_3 \|Dg_p^{-m}|E^u\|, \label{desigualdade 1}\end{equation} for $x\in V$. Analogously, if
$m$ is the biggest one such that $g^{-j}(x)\in V$ for $0\leq j\leq m$, then  there exist constants  $K_4$ and $K_5$ such that
\begin{equation}
K_4 \|Dg_p^{-m}|E^s\|^{-1} \leq d(x,W^u_{loc}(p,g))\leq K_5 \|Dg_p^{m}|E^s\|. \label{desigualdade 2}\end{equation}

Now, by  choice of $t$, either there exists $z\in D_t$ such that
\begin{equation}
d(g(z), W^s_{loc}(p,g))\geq A/2, \label{3.5}\end{equation} or there exists  $z\in g^t(D_t)$ such that
\begin{equation}
d(g^{-1}(z), J)\geq A/2. \label{3.6}\end{equation} Suppose the first case. Recall that for $j>T$ the rectangle  $D_j$ is defined and moreover
$D_j\subset V$, which implies $g(z),\,g(g(z)),...,\,g^{t-1-T}(g(z))\in V$. Hence, using  inequality (\ref{desigualdade 1}) we have
$$
\dis\frac{A}{2}\leq K_3 \|Dg_p^{-t+T+1}|E^u\|.
$$
On the other hand, using  inequality (\ref{desigualdade 2}) and the neighborhood $\Gamma$,  we can do the same thing for the second case,
obtaining
$$
\dis\frac{A}{2}\leq K_5 \|Dg_p^{t-1-T}|E^s\|.
$$
And then, since $Dg$ is bounded and $T$ is independent of $A$ we can find $K_1$ as we claimed.

$\hfill\square$

From now on fix $n$ a large positive integer as Proposition \ref{proposição princial} requires.

Since $A=\displaystyle\frac{2Ka\delta}{\pi N}$,  using Lemma \ref{afirma} and for  $N$ big enough, we have
$$
\dis\frac{1}{t}\log N> \min\left\{\frac{1}{t}\log\|Dg_p^{-t}|E^u\|^{-1},\, \frac{1}{t}\log\|Dg_p^{t}|E^s\|^{-1} \right\} -\frac{1}{2n}.
$$
Observe now, when $t$ goes to infinity the above minimum converges to the minimum between the smallest positive Lyapunov exponent, $\chi(p,g)$
as defined before,  and the absolute value of the biggest negative Lyapunov exponent of $p$ for $g$. Moreover, since we are in the symplectic
scenario these two numbers are equal. Therefore, provided $t$ goes to infinity when $N$ goes, we can find a positive integer $N_1$ such that
$$
\dis\frac{1}{t}\log N_1> \chi(p,g) -\frac{1}{n}.
$$
Which implies that it's possible to find some $C^1-$perturbation $g$ of $f$ such that
$$
h(g|\Lambda(p,N_1))> \chi(p,g)-\frac{1}{n}.
$$

For general case, when $p$ is not a fixed point of $g_1$, i.e., $\tau(p,g_1)>1$, we have that  $q\in W_{loc}^s(p,g_1)\cap W^u(f^j(p),g_1)$, for
some $0\leq j<\tau(p,g_1)$. Then as we did before,  we can find some perturbation $g$ of $g_1$ and $t=\tau(p,g)\tilde{t}+j$ such that $g^t$ has
a hyperbolic basic set $\tilde{\Lambda}(p,N)$. Moreover, there is a relation between the norm of $Dg^{\tau(p,g)}$ and $A$
as in the Lemma \ref{afirma}, changing $t$ by  $\tilde{t}$. %
 Hence, we can find $N_1$ such that

\begin{equation}
h(g|\Lambda(p,N))> \chi (p,g)-\frac{1}{n}, \text{ for } N\geq N_1. \label{item a}\end{equation}

Now, since $\Lambda(p,N)$ is conjugated to the product between some finite permutation dynamics and the shift of $N$ symbols, there exists an
ergodic measure $\mu_N\in\mathcal{M}(\Lambda(p,N))$ that maximizes the topological entropy. Hence, directly from (\ref{item a})
\begin{equation}
h_{\mu_N}(g)> \chi (p,g)-\frac{1}{n}, \text{ for } N\geq N_1.\label{item b}\end{equation}

We suppose from now on that $p$ is fixed, being the general case similar deduced as we did before. Next, we find a positive integer $N_2$ such
that if $\mu\in \mathcal{M}(f|\Lambda(g,N_2))$ is ergodic then $\rho(\mu,\mu_p)<1/n$  as required. For this, given $\zeta>0$ arbitrary small
it's enough to find $N=N(\zeta)$ such that  (orbit of) any point of $\Lambda(p,N)$ visits very frequently the ball of radius $\zeta$ and center
$p$.

Provided $p$ is a hyperbolic fixed point we have
$$
\bigcap_{i\in\Z}g^{i}(V)=\{p\}.
$$
Hence, given $\zeta>0$ arbitrary small, there exists a positive integer $n_1\geq T$, depending on $\zeta$, such that for every  $n_2\geq n_1$
$$
diam \, \bigcap_{-n_2\leq i \leq n_2} g^{i}(V)<\zeta.
$$

Now, if $\overline{V}=\bigcap_{i=0}^{ln_1}g^{-i}(V)$ and $z\in\overline{V}$, then for every $r\in[n_1,(l-1)n_1)$ we have that
$$
g^{r}(z)\in\bigcap_{|i|<n_1}g^{i}(V)\subset B_{\zeta}(p).
$$
So, the fraction of time in  $[0,ln_1)$ that the orbit of $z$ stay in $B_{\zeta}(p)$ is $\dis\frac{l-2}{l}$.

Recall that $t$ is the period of the periodic set $\Lambda(p,N)$ of $g$, and let us define $k=t-T$. Given $N$ big enough, let $l\in\N$ be such
that $(l+1)n_1\geq k> ln_1$. Since for every $z\in \Lambda(p,N)$ there exists $r\in[0,t)$ such that  $g^r(z)\in\overline{V}$, the frequency of
the orbit of  $z$ passing in $B_{\zeta}(p)$ is bigger than
$$
\frac{(l-2)n_1}{(l+1)n_1+T}.$$

Provided $l\rightarrow\infty$ when $N\rightarrow\infty$, given $\zeta_1>0$ we can choose  $N_2$ such that the frequency of the orbit of every
$z\in\Lambda(p,N)$ passing in $B_{\zeta}(p)$ is bigger than $1-\zeta_1$, and then choosing $\zeta_1$ smaller if necessary we have

\begin{equation}
d(\mu,\mu_p)<\frac{1}{n}, \text{ for every ergodic measure } \mu\in\mathcal{M}(\Lambda(p,N)), \; N\geq N_2.
\end{equation}

Finally, we find $N_3$ in order to obtain property (d) for $\Lambda(p,N)$, $N\geq N_3$.

We define
$$
V_k^{u}=V\cap g(V)\cap...\cap g^{k}(V), \text{ and }
$$
$$
V_k^{s}=V\cap g^{-1}(V)\cap...\cap g^{-k}(V).
$$

Given vectors $v,w\in\R^{2n}$ and subspaces  $E,F\subset\R^{2n}$ we define
$$
ang(v,w)=\left|\tan\left[\arccos\left(\frac{<v,w>}{\|v\|\|w\|}\right)\right]\right|,$$

$$
ang(v,E)=\min_{w\in E,\, |w|=1}\, ang(v,w)\quad \text{and}\quad ang(E,F)=\min_{w\in E,\, |w|=1} \, ang(w,F).
$$

\begin{remark}
Another definition of the angle between two subspaces in literature is the following: If $\R^n=E\oplus F$ is some decomposition, let $L:
E^{\perp}\rightarrow E$ be the linear map such that  $F=\{w+Lw; \; w\in E^{\perp}\}$, and then some authors define the angle between $E$ and $F$
as $\|L\|^{-1}$. Nevertheless, there is an equivalence between this definition and the one presented here.
\end{remark}

We need the following lemma.

\begin{lemma}
With above definitions, there exists constant  $K_6$ such that if $z\in V^{s}_k$, $v\in \R^{2n}\backslash E_p^s$ and $ang(g^{k}(v),E_p^s)\geq
1$, then
$$
|Dg^{k}(z)(v)|\geq K_6 \|Dg_p^{-k}\|^{-1}|v| \,\min \{ang(v,E_p^{s}),\, 1\}.
$$

\label{lema}\end{lemma}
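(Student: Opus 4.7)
The plan is to reduce everything to a linear estimate on the tangent space at $p$ and then grind out an elementary case analysis. Since $z \in V^s_k$, by definition $z, g(z), \ldots, g^k(z)$ all lie in $V$, where $g$ coincides with its linear part at $p$ in the chosen symplectic coordinates. Therefore $Dg^k(z) = Dg_p^k$, and I may treat $v$ as a vector in $\R^{2n}$ with the invariant splitting $\R^{2n}=E_p^s\oplus E_p^u=(\R^n\times\{0\}^n)\oplus(\{0\}^n\times\R^n)$, which is orthogonal in the Euclidean structure of the chart. Up to constants comparing the Riemannian and Euclidean norms (which I absorb into $K_6$), I write $v=v^s+v^u$ with $|v|^2=|v^s|^2+|v^u|^2$, and the hypothesis $v\notin E_p^s$ simply says $v^u\neq 0$.

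Next I translate the angle hypothesis. Because $Dg_p^k$ preserves the splitting, $Dg_p^k v = Dg_p^k v^s + Dg_p^k v^u$ with summands in $E_p^s$ and $E_p^u$ respectively, and orthogonality gives $ang(Dg_p^k v, E_p^s)=|Dg_p^k v^u|/|Dg_p^k v^s|$. Hence $ang(Dg_p^k v, E_p^s)\ge 1$ is equivalent to $|Dg_p^k v^u|\ge|Dg_p^k v^s|$, so
$$
|Dg_p^k v|^2 = |Dg_p^k v^s|^2+|Dg_p^k v^u|^2 \;\ge\; |Dg_p^k v^u|^2.
$$
On the invariant subspace $E_p^u$ the map $Dg_p^k$ is expanding, with minimum expansion
$$
\|(Dg_p^k|E_p^u)^{-1}\|^{-1}=\|Dg_p^{-k}|E_p^u\|^{-1}\ge \|Dg_p^{-k}\|^{-1},
$$
so $|Dg_p^k v^u|\ge \|Dg_p^{-k}\|^{-1}|v^u|$, giving $|Dg_p^k v|\ge \|Dg_p^{-k}\|^{-1}|v^u|$.

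It remains to compare $|v^u|$ with $|v|\,\min\{ang(v,E_p^s),1\}$. The same orthogonal decomposition yields $ang(v,E_p^s)=|v^u|/|v^s|$ (with the convention $+\infty$ when $v^s=0$). If $ang(v,E_p^s)\ge 1$ then $|v^u|\ge|v^s|$, whence $|v^u|\ge |v|/\sqrt{2}$ and the $\min$ equals $1$. If $ang(v,E_p^s)<1$ then $|v^s|>|v^u|$, so $|v|\le\sqrt{2}\,|v^s|$ and $|v|\,ang(v,E_p^s)=|v|\,|v^u|/|v^s|\le\sqrt{2}\,|v^u|$. In either case $|v^u|\ge(1/\sqrt{2})\,|v|\,\min\{ang(v,E_p^s),1\}$. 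Concatenating the three inequalities yields the claim with $K_6=1/\sqrt{2}$ times the fixed constants arising from the metric comparison on $V$. There is no real obstacle here: the geometry of $V$ (linearity of $g$ and coordinate-plane location of $E_p^{s,u}$) was arranged precisely so this estimate reduces to a linear exercise; the only mild care is in the edge case $v^s=0$, which the $\min$ with $1$ is designed to absorb.
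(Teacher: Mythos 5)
Your proof is correct and follows essentially the same route as the paper's: reduce to the linear map $Dg_p^k$ on $V$, use the orthogonal splitting $E_p^s\oplus E_p^u$ to write $ang(v,E_p^s)=|v^u|/|v^s|$, apply the expansion bound $\|Dg_p^{-k}|E_p^u\|^{-1}\geq\|Dg_p^{-k}\|^{-1}$ on the unstable component, and absorb a norm-equivalence constant into $K_6$. The only cosmetic difference is that the paper works with the max norm $|v|'=\max\{|v^s|,|v^u|\}$ where you use the Euclidean norm and a factor $1/\sqrt{2}$.
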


\proof

Using the decomposition of $\R^{2n}$ fixed on $V$, we have $v=(v^s,v^u)$, $v^{s(u)}\in E_p^{s(u)}$, for every $v\in \R^{2n}$. Let
$|v|'=\max\{|v^s|,|v^u|\}$ be the maximum norm.

Since ${E_p^s}^{\perp}=E_p^u$, and $Dg^{k}(z)=Dg_p^k$ if $z\in V$, we have
\begin{equation}
ang(v,E_p^s)=\frac{|v^u|}{|v^s|} \; \text{ and } \; 1\leq ang(Dg^k(z)(v),E_p^s)=\frac{|Dg_p^k(v^u)|}{|Dg_p^k(v^s)|}. \label{equa
1}\end{equation}

Then,

\begin{align*}
|Dg^k(z)(v)|'&= |Dg_p^k(v^u)|
\\
&\geq \|Dg^{-k}|E_p^u\|^{-1}|v^u|,
\\
&= \|Dg_p^{-k}|E_p^u\|^{-1}|v^s|\, ang(v,E^s);
\end{align*}
which implies
\begin{equation}
|Dg^k(z)(v)|'\geq \|Dg_p^{-k}|E^u\|^{-1}|v|'\, \min\{ ang(v,E_p^s),\, 1\}. \label{eq19}\end{equation}

Therefore, by the equivalence between norms, the result follows.

$\hfill\square$

Recall now, $$\Lambda(p,N)=\bigcup_{i=0}^{t-1} g^i(\tilde{\Lambda}(p,N))$$ is a hyperbolic set for $g$, with $\tilde{\Lambda}(p,N)\subset V$,
and $t=k+T$ where
 $g^{i}(\tilde{\Lambda}(p,N))\subset V$ for $0\leq i\leq k$. Moreover, by construction of
$\tilde{\Lambda}(p,N))$ we know that the hyperbolic decomposition $T_{\Lambda(p,N)}M=\tilde{E}^s\oplus \tilde{E}^u$ is such that
$\tilde{E}^s(z)$ and $\tilde{E}^u(g^{k}(z))$ are close to $E^s(p)$ and $E^u(p)$, respectively, for every $z\in \tilde{\Lambda}(p,N)$. In
particular, $ang(Dg^{k}(z)(v),E^s(p))>1$ for $v\in \tilde{E}^u(z)$.

Hence, we  can use Lemma \ref{lema} to find a constant $K_6$, such that for every $z\in \tilde{\Lambda}(p,N)$ and $v\in \tilde{E}^u(z)$,
\begin{equation}
|Dg^r(z)(v)|\geq (C\, K_6)^l \, \|Dg_p^{-k}\|^{-l}|v|, \text{ for } r=l(k+T),\; l\in \N. \label{equa 2}\end{equation} where
$$C=\inf_{z\in V\backslash g^{-1}(V), \; |v|=1} \|Dg^{T}(z)(v)\|.$$

Therefore, it's not difficult to see that for $N$ big enough, all points in $\tilde{\Lambda}(p,N)$ have positive lyapunov exponentes bigger than
$\chi(p,g)-1/n$. In particular, we can choose $N_3$ in order to get $k>>T$, such that for any periodic point $q\in \Lambda(p,N)$, $N>N_3$,

$$
\chi(q,g)\geq \chi(p,g)-\frac{1}{n}.
$$

Hence, if we take  $\Lambda(p,n)=\Lambda(p,N)$ for $N=\max\{N_1,\, N_2,\, N_3\}$, the properties of proposition are satisfied for the
perturbation $g$ of $f$.

Now, by robustness of the hyperbolic periodic point $p$ and the set $\Lambda(p,n)$, properties  (1) and (2) are also satisfied for
diffeomorphisms close to $g$. Recall that $\mu_n$ is the one that maximize topological entropy, then in order to prove properties (3) and (4) we
just concerned  with some neighborhood of the set $\Lambda(p,n)$, and so, the same could be done for diffeomorphisms near $g$. Which concludes
the proof of proposition.

$\hfill\square$

\section{Example of discontinuity points for topological entropy}

In order to prove Theorem C, we construct an example of a $C^{\infty}$ area preserving diffeomorphism over $S^2$ that is a non upper
semi-continuity point for topological entropy in the space $\diff_{\omega}^1 (S^2)$.

Firstly, let $S$ be any surface different of $\mathbb{T}^2$. As $S$ does not accept Anosov diffeomorphism,  by Theorem A we have that for
generic volume preserving diffeomorphisms of $S$
$$h(f)=s(f).$$ Hence, using that $s(.)$ is a lower semi-continuous function, if we find a diffeomorphim $f\in \diff^1_{\omega}(S^2)$ such that
$h(f)<s(f)$, then this is an example where topological entropy is not upper semi-continuous.

In order to find such diffeomorphism, we use the following result of Lai-Sang Young \cite{young}.

\begin{theorem}
Let $\phi: \R\times M\rightarrow M$ be a flow in a  2-dimensional manifold $M$. Then, the diffeomorphism $\phi_t=\phi(t,.)$ over $M$ has zero
topological entropy, i.e, $h(\phi_t)=0$, for every t.
\end{theorem}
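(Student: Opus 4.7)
The plan is to show that the time-$t$ map of any flow on a compact $2$-dimensional manifold $M$ has vanishing topological entropy. First I would reduce to a single time: by the general scaling formula $h(\phi_t) = |t|\, h(\phi_1)$, valid for any continuous flow, it suffices to establish $h(\phi_1)=0$. Equivalently, I would fix an arbitrary $\varepsilon>0$ and try to bound the maximal number $r(n,\varepsilon)$ of $(n,\varepsilon)$-separated orbits for $\phi_1$ by a subexponential function of $n$.

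The crucial structural fact in dimension two is that, by uniqueness of ODE solutions, two orbits of $\phi$ cannot cross; combined with the Jordan curve theorem on the surface, this severely constrains how orbits can be distinguished. Following Poincar\'e--Bendixson theory (and its extensions to arbitrary compact surfaces by Schwartz and others), I would cover the complement of small neighborhoods of the (finitely many) singularities of the flow by a finite collection of pairwise disjoint transversal arcs $\Sigma_1,\dots,\Sigma_k$. Every non-stationary orbit must cross $\Sigma:=\sqcup_i\Sigma_i$; the induced first-return map $P\colon \Sigma\to \Sigma$ is then a piecewise continuous map of a one-dimensional space.

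The next step is to exploit the one-dimensional nature of $P$. Homeomorphisms of intervals and circles have zero topological entropy (orientation is preserved or reversed, and orbits cannot weave past each other), so $h_{top}(P)=0$. Combining this with Abramov's formula for the entropy of the flow as a suspension of its first-return map (with roof function equal to the return time), one concludes that the entropy of $\phi_1$ restricted to the region of $M$ that avoids small neighborhoods of the singularities is zero. The number of distinct itineraries with respect to the partition induced by $\Sigma$ grows at most polynomially in $n$, since two orbits with the same itinerary in $\Sigma$ and sufficiently close initial conditions stay close for time $n$.

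The main obstacle is to control the orbits that spend long stretches of time inside small neighborhoods of the singular set, where the flow is slow and naive estimates do not apply. Here I would argue locally: near a singular point $p$, any two orbit segments of $\phi_1$ of length $n$ that remain $\varepsilon$-close on a flow box must agree up to reparametrization by a bounded amount of time, so the number of $\varepsilon$-distinguishable orbit pieces in such a box also grows polynomially (not exponentially) in $n$. This is the delicate part and also handles exceptional recurrent behaviour such as Cherry-type flows or Denjoy minimal sets on higher-genus surfaces. Putting together the contribution from the transversal-arc region and from the singular neighborhoods yields $r(n,\varepsilon)\le Cn^{d}$ for some $d,C>0$, whence $h(\phi_1)=0$ and therefore $h(\phi_t)=0$ for every~$t$.
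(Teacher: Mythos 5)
This theorem is not proved in the paper at all: it is quoted as a known result of Lai-Sang Young \cite{young}, so there is no internal argument to compare yours with, and your sketch has to stand on its own. As it stands it has genuine gaps, and they occur precisely at the points where Young's actual proof does its work.

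First, you assume the flow has finitely many singularities and that a Poincar\'e--Bendixson-type cover of their complement by finitely many disjoint transversal arcs is available. The theorem is stated for an arbitrary flow on a $2$-manifold, whose fixed-point set can be any closed subset; nothing in the hypotheses gives finiteness, so the first structural step is unjustified (it happens to hold for the specific Hamiltonian example the paper uses, but not for the theorem as stated). Second, the appeal to ``homeomorphisms of intervals and circles have zero topological entropy'' does not apply: the first-return map $P$ to a union of transversal arcs is only a partially defined, piecewise-continuous injection, not a homeomorphism of a one-dimensional manifold. It is true that such return maps of surface flows have zero entropy---essentially because non-crossing of orbits forces the number of monotonicity pieces of $P^n$ to grow subexponentially, so one can invoke the Misiurewicz--Szlenk formula---but that is a claim requiring proof and you have not supplied it. Third, and most seriously, the singular neighborhoods: Abramov's formula applies only where the suspension representation holds, and the return time is unbounded near the fixed-point set. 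Your local claim that ``$\varepsilon$-close orbit segments in a flow box agree up to a bounded reparametrization'' is vacuous at a singular point, where there is no flow box, and bounding the number of $(n,\varepsilon)$-separated orbit segments that linger near the singular set for long, unbounded times is exactly the heart of the matter; as written, the argument assumes away the only hard case. (Your opening reduction via $h(\phi_t)=|t|\,h(\phi_1)$ is correct and unproblematic.)
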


By this result and the above discussion, to prove Theorem C it's enough to find a hamiltonian flow in $S^2$ with a hyperbolic periodic
orbit. 

In what follows we describe the construction of such example which uses the well known mathematical pendulum, see \cite{palis takens}. Recall
that a vector field $X_H$ over a compact symplectic manifold $(M,\omega)$ is Hamiltonian iff there exists a smooth map $H: M\rightarrow \R$ such
that
$$
\omega(X_H,.)=dH.
$$
Also, recall  $\phi_t=\phi(t,.):M\rightarrow M$ is a symplectic diffeomorphism, for every $t\in \R$, where $\phi$ is the flow generated by
$X_H$. Note that in dimension two the space of conservative diffeomorphisms coincides with the symplectic one.

From now on we consider the symplectic manifold $(S^2, \omega)$ the two dimensional sphere, with $\omega(x)=<x,u\times v>$, for $x\in S^2$ and
$u,v\in T_x S^2$, some induced area form over $S^2$. If we give for $S^2$ cylindrical polar coordinates $(\theta, z)$, $0\leq\theta< 2\pi$ and
$-1<z<1$, away from its poles, we can verify that $\omega=d\theta \wedge d z$.

Let $H_1(\theta, z)=z$ be the height function over the sphere, and $X_{H_1}$ be the Hamiltonian vector field generated by $H_1$. Note the flow
generated by $X_{H_1}$ has no hyperbolic periodic orbits, more precisely the poles are non-hyperbolic singularities, and the flow far from them
is $\phi(t,(\theta,z))=(\theta+t,z)$, i.e., rotations.

On the other hand, we can use the famous mathematical pendulum on $S^1\times \R$ to build hyperbolic periodic orbits in the previous flow.  Let
$H_2: S^1\times \R\rightarrow \R$ be the total energy of the pendulum, $H_2(\theta,z)=\frac{1}{2} z^2-\cos \theta$, then the Hamiltonian vector
field $X_{H_2}$ on the cylinder gives us the phase portrait of the pendulum. We observe that the flow generated by $X_{H_2}$ has an unstable
equilibrium at $p=(\pi, 0)$. Now, considering $\beta: (-1,1)\rightarrow \R$ the $C^{\infty}$ bump function such that $\beta(x)=1$ if $|x|<1/2$
and $\beta(x)=0$ if $|x|>2/3$, we define $H: S^1\times (-1,1)\rightarrow \R$ as follows
$$
H(\theta, z)=\beta(|z|)H_2(\theta,z)+ (1-\beta(|z|)) H_1(\theta,z).
$$

Hence, after some coordinate change we can look for this function over $S^2$. In fact, what we did was just to carry the pendulum flow to the
sphere by changing the height function on some strip. See figure \ref{exemplo}. And finally,  $X_H$ is a Hamiltonian vector field on $S^2$ and
the flow generated by it has a hyperbolic singularity as we wanted.

\begin{figure}[httb]
\vsm\centering
\includegraphics[scale=0.7]{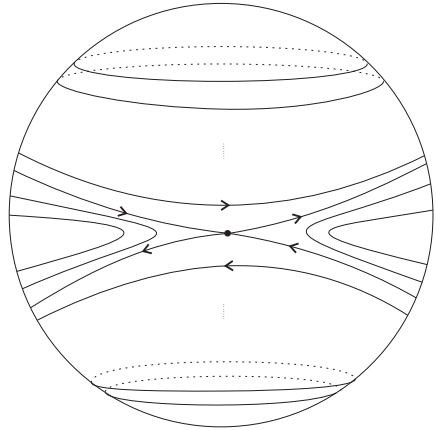}
\caption{Phase portrait of $X_H$}\vsm\label{exemplo}\end{figure}

{\bf Acknowledgements:}
T. C was
supported by CAPES-Brazil and FAPESP-Brazil by a Doctoral fellowship. A. T was supported by FAPESP-Brazil and CNPq-Brazil. The authors also wish to point out the excellent research atmosphere at ICMC-USP.

\medskip

\vspace{0,3cm} \flushleft
Thiago Catalan\\
Instituto de Ci\^encias, Matem\'atica e Computa\c{c}\~ao \\
Universidade de S\~ao Paulo\\
16-33739153 S\~ao Carlos-SP, Brazil\\
E-mail: catalan@icmc.usp.br
\flushleft
Ali Tahzibi\\
Instituto de Ci\^encias, Matem\'atica e Computa\c{c}\~ao \\
Universidade de S\~ao Paulo\\
 S\~ao Carlos-SP, Brazil\\
 Email: tahzibi@icmc.usp.br, 
 URL: www.icmc.usp.br/$\sim$tahzibi

\end{document}